\documentclass
[
    DIV=10,
    abstracton
]
{scrartcl}

\usepackage{amsfonts,amssymb,amsmath,amsthm}
\usepackage{authblk}
\usepackage{bm}
\usepackage{bbm}
\usepackage{dsfont}
\usepackage{tabularx}
\usepackage[pdftex]{graphicx}
\usepackage{xcolor}
\usepackage{epstopdf}
\usepackage{enumerate}  
\usepackage{tikz} 
\usetikzlibrary{decorations.markings} 
\usetikzlibrary{arrows,decorations.pathmorphing,backgrounds,fit,positioning,shapes.symbols,chains}
\usepackage{pgfplots}
\usepgfplotslibrary{fillbetween}

\usepackage[colorlinks,
			pdffitwindow=false,
      plainpages=false,
      pdfpagelabels=true,
     	pdfpagemode=UseOutlines,
      pdfpagelayout=SinglePage,
			bookmarks=false,
			colorlinks=true,
      hyperfootnotes=false,
			linkcolor=blue,
			citecolor=green!50!black]{hyperref}

\usepackage{paralist} 
\usepackage{algorithm}
\usepackage[noend]{algpseudocode}


\graphicspath{{./graphics/}}

\newtheoremstyle{custom}{3pt}{3pt}{}{}{\bfseries}{:}{.5em}{}
\theoremstyle{custom}
\newtheorem{example}    {Example}

\newtheorem{theorem}    [example]{Theorem}

\newtheorem{remark}[example]{Remark}
\numberwithin{example}{section}

\newcommand{\A}{\op{A}}

\newcommand{\R}{\mathbb{R}}
\newcommand{\C}{\mathbb{C}}

\newcommand{\set}[1]{\mathbb{#1}}		
\newcommand{\X}{\set{X}}

\newcommand{\op}[1]{\mathcal{#1}}	
\def \prob {\mathsf{P}} 
\def \expect {\mathsf{E}} 
\newcommand{\innerprod}[2]{\langle #1, #2 \rangle}
\newcommand{\T}{\op{T}}

\renewcommand{\P}{\op{P}}
\newcommand{\K}{\op{K}}
\newcommand{\Q}{\op{Q}}
\newcommand{\one}{\mathds{1}}
\newcommand{\dat}[1]{\bm{#1}}
\newcommand{\rhoref}{\mu_{\text{ref}}}

\DeclareMathOperator*\argmin{\arg\,\min}

\title{Optimal data-driven estimation of generalized Markov state models for non-equilibrium dynamics}

\author[1]{P\'eter Koltai}
\author[1]{Hao Wu}
\author[1]{Frank No\'e}
\author[1,2]{Christof Sch\"utte}

\affil[1]{\normalsize Department of Mathematics and Computer Science, Freie Universit\"at Berlin, Germany}
\affil[2]{\normalsize Zuse Institute Berlin, Germany}

\date{}

\begin{document}

\maketitle

\begin{abstract}
There are multiple ways in which a stochastic system can be out of statistical equilibrium. It might be subject to time-varying forcing; or be in a transient phase on its way towards equilibrium; it might even be in equilibrium without us noticing it, due to insufficient observations; and it even might be a system failing to admit an equilibrium distribution at all.
We review some of the approaches that model the effective statistical behavior of equilibrium and non-equilibrium dynamical systems, and show that both cases can be considered under the unified framework of optimal low-rank approximation of so-called transfer operators. Particular attention is given to the connection between these methods, Markov State models, and the concept of metastability, further to the estimation of such reduced order models from finite simulation data. We illustrate our considerations by numerical examples.
\end{abstract}


\section{Introduction}


The term ``equilibrium'' is not used uniformly throughout the literature. So, to start off with, an equilibrium process in this paper means a reversible one in the stochastic sense, see Table~\ref{tab:nomen}.

Metastable molecular systems under non-equilibrium conditions caused by external fields have attracted increasing
interest recently. For instance, new experimental techniques like atomic force microscopy or simulation studies regarding the potential effects of electromagnetic
radiation on the human body tissue  have been extensively
investigated in the literature.  Specifically adapted molecular dynamics (MD) simulations have proved particularly useful for understanding the response of biomolecular conformations to external fields. 
Despite this significance, reliable tools for the quantitative description of non-equilibrium phenomena like
the conformational dynamics of a molecular system under external forcing are still lacking. 

For MD simulations in equilibrium such specific and reliable tools have been developed: Markov State Models (MSMs) allow for an accurate description of the transitions between the main conformations of the molecular system under investigation. MSMs for equilibrium MD have been well developed over the
past decade in theory~\cite{SchSa13,prinz2011markov}, applications (see the recent book~\cite{A19-1} for an overview), and software implementations~\cite{A19-49, MSMBuilder17}. They now form a set of standard tools.
The principal idea of equilibrium MSMs is to approximate the MD system (in continuous state or phase space) by
a reduced Markovian dynamics over a finite number of (macro-)states (i.e., in discrete state space). These (macro-)states represent the dominant metastable sets of the system, i.e., sets in which typical MD trajectories stay substantially longer than the system needs for a transition to another such set~\cite{SchSa13,schuette2011markov}. In equilibrium MD, these metastable sets are the main conformations of the molecular system under consideration which, often enough, are given by the main wells in its energy landscape. 
It has been shown that for many (bio)molecular systems  the Markovian dynamics given by an MSM allows very close approximation of the longest relaxation processes of the underlying molecular system under equilibrium conditions~\cite{sarich2010approximation,Eigenvalues}. 

However, in the non-equilibrium setting the above tools are not guaranteed to continue working. Note that there are different possibilities to deviate from the ``equilibrium'' situation, and this makes the term ``non-equilibrium'' ambiguous. To avoid confusion, we consider one of the following cases when referring to the non-equilibrium setting (again, see Table~\ref{tab:nomen} on terminology).
\begin{compactenum}[(i)]
\item
\emph{Time-inhomogeneous dynamics}, e.g., the system feels a time-dependent external force, for instance due to an electromagnetic field or force probing.
\item
\emph{Time-homogeneous non-reversible dynamics}, i.e., where the governing laws of the system do not change in time, but the system does not obey detailed balance, and, additionally we might want to consider the system in a non-stationary regime.
\item
\emph{Reversible dynamics but non-stationary data}, i.e., the system possesses a stationary distribution with respect to which it is in detailed balance, but the empirical distribution of the available data did not converge to this stationary distribution.
\end{compactenum}
Even though we consider genuinely stochastic systems here, the algorithm of section~\ref{sec:data based approx} can be used for deterministic systems as well---and indeed it is, see Remark~\ref{rem:datameth} and references therein.

Note that with regard to the considered dynamics (i)--(iii) represent cases with decreasing generality.
For (i), time-dependent external fields act on the system, such that the energy landscape depends on time, i.e., the main wells of the energy landscape can move in time. That is, there may no longer be time-independent metastable sets in which the dynamics stays for long periods of time before exiting. Instead, the potentially metastable sets will move in state space. Generally, moving ``metastable'' sets cannot be considered metastable anymore. However, the so-called \emph{coherent sets}, which have been studied for non-autonomous flow fields in fluid dynamics~\cite{FrSaMo10,Froyland2013}, permit to get a meaning to the concept of metastability~\cite{KoCiSch16}. 
For (iii), the full theory of equilibrium Markov state modeling is at one's disposal, but one needs to estimate certain required quantities from non-equilibrium data~\cite{WuEtAl17}. Case (ii) seems the most elusive, due to the fact that on the one hand it could be handled by the time-inhomogeneous approach, but on the other hand it is a time-homogeneous system and some structural properties could be carried over from the reversible equilibrium case that are out of reach for a time-inhomogeneous analysis. For instance, if the dynamics shows cyclic behavior, it admits structures that are well captured by tools from the analysis of time-homogeneous dynamical systems (e.g., Floquet theory and Poincar\'e sections~\cite{MSM_periodic, FrKo17}), and a more general view as in (i) might miss them; however, cyclic behavior is not present in reversible systems, such that the tools from (iii) are doomed to failure in this respect.
In order to avoid confusion, however, it should be emphasized that the three cases distinguished above do not suffice to clarify the discussion about the definition of equilibrium or non-equilibrium, e.g., see the literature on non-equilibrium steady state (NESS) systems~\cite{seifert2010fluctuation,LeLaPa17}.

Apart from MSMs the literature on kinetic lumping schemes offers several other techniques for finding a coarse-grained descriptions of systems~\cite{Nystrom,Network,KnSp15}. These techniques are, however, not built on the intuition of metastable behavior in state space. What we consider here can be seen in connection to optimal prediction in the sense of the Mori--Zwanzig formalism~\cite{mori1965transport,Zwa73,ChHaKu00,ChHaKu02}, but we will try to choose the observables of the system such that projecting the dynamics on these keeps certain properties intact without including memory terms.

The aim of this article is to review and unify some of the theoretical and also data-driven algorithmic approaches that attempt to model the effective statistical behavior of non-equilibrium systems. To this end, a MSM, or, more precisely, a \emph{generalized MSM} is sought, i.e., a possibly small matrix~$T_k$ that carries the properties of the actual system that are of physical relevance. In the equilibrium case, for example, this includes the slowest timescales on which the system relaxes towards equilibrium (section~\ref{sec:dynamics and functions}). The difference of generalized to standard MSMs is that we do not strictly require the former to be interpretable in terms of transition probabilities between some regions of the state space (section~\ref{sec:MSMeq}), however usually there is a strong connection between the matrix entries and metastable sets. We will, however, focus on a slightly different characteristic of the approximate model, namely its ``propagation error'', that will allow for a straightforward generalization from equilibrium (reversible) to all our non-equilibrium cases (section~\ref{sec:MSMneq}); and even retain the physical intuition behind true MSMs through the concept of coherent sets. We will show in section~\ref{sec:data based approx} how these considerations can be carried over to the case when only a finite amount of simulation data is available. The above non-equilibrium cases (ii)--(iii) can be then given as specific instances of the construction (section~\ref{sec:nonstat timehomog}). The theory is illustrated with examples throughout the text.

We note in advance that in course of the (generalized) Markov state modeling we will consider different instances of approximations to a certain linear operator~$\T:\set{S}\to\set{S}$ mapping some space to itself (and sometimes to a different one). On the one hand, there will be a projected operator~$\T_k:\set{S}\to\set{S}$, where~$\T_k = \Q\T\Q$ with a projection~$\Q:\set{S}\to\set{V}$ having a $k$-dimensional range~$\set{V} \subset \set{S}$. On the other hand, we will consider the restriction of the projected operator~$\T_k$ to this $k$-dimensional subspace, i.e.,~$\T_k:\set{V} \to \set{V}$, also called $\set{V}$-restriction of~$\T_k$, which has a $k\times k$ matrix representation (with respect to some chosen basis of~$\set{V}$) that we will denote by~$T_k$.
\begin{table}[h]
\renewcommand{\arraystretch}{1.25}
\begin{tabularx}{\textwidth}{l X}
\hline
\multicolumn{2}{l}{A stochastic (Markov) process is called...} \\
\emph{time-homogeneous} & if the transition probabilities from time $s$ to time $t$ depend only on $t-s$ (in analogy to the evolution of an autonomous ODE). \\
\emph{stationary} & if the distribution of the process does not change in time (such a distribution is also called invariant, cf.~\eqref{eq:statdens}). \\
\emph{reversible} & if it is stationary and the detailed balance condition~\eqref{eq:detbal} holds (reversibility means that time series are statistically indistinguishable in forward and backward time).\\
\hline
\end{tabularx}
\caption{Nomenclature used here for stochastic processes.}
\label{tab:nomen}
\end{table}

\section{Studying dynamics with functions}
\label{sec:dynamics and functions}

\subsection{Transfer operators}

In what follows, $ \prob [\,\cdot \mid \mathfrak{E}] $ and $ \expect  [\, \cdot \mid \mathfrak{E}] $ denote probability and expectation conditioned on the event $ \mathfrak{E} $. Furthermore, $ \{x_t\}_{t \ge 0} $ is a stochastic process defined on a state space $ \X \subset \R^d $. For instance, we can think of $x_t$ being the solution of the stochastic differential equation
\begin{equation}
    \mathrm{d} x_t = -\nabla W(x_t)\,\mathrm{d}t + \sqrt{2\beta^{-1}} \, \mathrm{d}w_t\,,
    \label{eq:overdampedLangevin}
\end{equation}
describing diffusion in the potential energy landscape given by~$W$. Here,~$\beta$ is the non-dimensionalized inverse temperature, and $w_t$ is a standard Wiener process (Brownian motion).
The \emph{transition density function} $ p^t \colon \X \times \X \to \R_{\ge 0} $ of a time-homogeneous stochastic process $ \{ x_t \}_{t \ge 0} $ is defined by
\begin{equation*}
    \prob [ x_t \in \set{A} \mid x_0 = x] = \int_{\set{A}} p^t(x,y) \, \mathrm{d}y\,,\qquad \set{A}\subseteq \X\,.
\end{equation*}
That is, $ p^t(x,y) $ is the conditional probability density of $ x_t = y $ given that $ x_0 = x $. We also write~$x_t \sim p^t(x_0,\cdot)$ to indicate that $x_t$ has density~$p^t(x_0,\cdot)$.

With the aid of the transition density function, we will now define transfer operators, i.e., the action of the process on functions of the state. Note, however, that the transition density is in general not known explicitly, and thus we will need data-based approximations to estimate it.
We assume that there is a \emph{unique} stationary density~$ \mu $, such that $ \{ x_t\}_{t \ge 0} $ is stationary with respect to~$\mu$; that is, it satisfies~$x_0\sim\mu$ and
\begin{equation} \label{eq:statdens}
\mu(x) = \int_\X \mu(y) \, p^t(y,x)  \mathrm{d}y \quad\text{for all }t\ge 0.
\end{equation}

Let now $f$ be a probability density over~$\X$, $u = f/\mu$ a probability density with respect to $\mu$ (meaning that $u\mu$ is to be interpreted as a physical density), and $g$ a scalar function of the state (an ``observable''). We define the following \emph{transfer operators},  for a given lag time $ \tau $:
\begin{compactenum}[(a)]
\item The \emph{Perron--Frobenius operator} (also called propagator),
\begin{equation*}
    \mathcal{P}^\tau f(x) = \int_{\X} f(y)\, p^\tau(y,x) \, \mathrm{d}y
\end{equation*}
evolves probability distributions.
\item The \emph{Perron--Frobenius operator with respect to the equilibrium density} (also called transfer operator, simply),
\begin{equation*}
    \T^\tau u(x) = \frac{1}{\mu(x)} \int_{\X} u(y)\mu(y)\, p^\tau(y,x)\,\mathrm{d}y\,.
\end{equation*}
evolves densities with respect to~$\mu$.
\item The \emph{Koopman operator}
\begin{equation} \label{eq:Koopman operator}
    \K^\tau g(x) = \int_{\X} p^\tau(x,y) \, g(y) \,\mathrm{d}y
                       = \expect [ f(x_\tau) \mid x_0 = x]
\end{equation}
evolves observables.
\end{compactenum}
All our transfer operators are well-defined non-expanding operators on the following Hilbert spaces:\footnote{We denote by~$L^q = L^q(\X)$ the space (equivalence class) of $q$-integrable functions with respect to the Lebesgue measure. $L^q_{\nu}$ denotes the same space of function, now integrable with respect to the weight function~$\nu$.},~$\mathcal{P}^\tau:L^2_{1/\mu} \to L^2_{1/\mu}$, $\T^\tau: L^2_{\mu} \to L^2_{\mu}$, and~$\K^\tau: L^2_{\mu} \to L^2_{\mu}$~\cite{BaRo95,SchCa92,KNKWKSN17}.
The equilibrium density $ \mu $ satisfies $ \mathcal{P}^\tau \mu=\mu$, that is, $ \mu $ is an eigenfunction of $ \mathcal{P}^\tau $ with associated eigenvalue $ \lambda_0 = 1 $. The definition of $ \T^\tau $ relies on $\mu$, we have
\begin{equation} \label{eq:PT_identity}
\mu\, \T^\tau u = \mathcal{P}^\tau (u \mu)\,,
\end{equation}
thus $\P^\tau \mu = \mu$ translates into $\T^\tau \one = \one$, where $\one = \one_{\X}$ is the constant one function on~$\X$.

\subsection{Reversible equilibrium dynamics and spectral decomposition}

An important structural property of many systems used to model molecular dynamics is \emph{reversibility}. Reversibility means that the process is statistically indistinguishable from its time-reversed counterpart, and it can be described by the detailed balance condition
\begin{equation} \label{eq:detbal}
    \mu(x) \, p^t(x,y) = \mu(y) \, p^t(y,x)\qquad \forall x, y \in \X,\ t\ge 0\,.
\end{equation}

The process generated by~\eqref{eq:overdampedLangevin} is reversible and ergodic, i.e., it admits a unique positive equilibrium density, given by~$\mu(x) \propto \exp(-\beta W(x))$, under mild growth conditions on the potential~$W$~\cite{MaSt02,MaStHi02}. Note that the subsequent considerations hold for all stochastic processes that satisfy reversibility and ergodicity with respect to a unique positive invariant density and are \emph{not} limited to the class of dynamical systems given by~\eqref{eq:overdampedLangevin}. See \cite{SchSa13} for a discussion of a variety of stochastic dynamical systems that have been considered in this context. Furthermore, if~$p^t(\cdot,\cdot)$ is a continuous function in both its arguments for~$t>0$, then all the transfer operators above are compact, which we also assume from now on. This implies that they have a discrete eigen- and singular spectrum (the latter meaning it has a discrete set of singular values). For instance, the process generated by~\eqref{eq:overdampedLangevin} has has continuous transition density function under mild growth and regularity assumptions on the potential~$W$.

As a result of the detailed balance condition, the Koopman operator $ \K^\tau $ and the Perron--Frobenius operator with respect to the equilibrium density $ \T^\tau $ become identical and we obtain
\begin{equation} \label{eq:selfadj}
    \innerprod{\mathcal{P}^\tau f}{g}_{1/\mu} = \innerprod{f}{\mathcal{P}^\tau g}_{1/\mu}
    \quad \text{and} \quad
    \innerprod{\T^\tau f}{g}_\mu     = \innerprod{f}{\T^\tau g}_\mu\,,
\end{equation}
i.e., all the transfer operators become self-adjoint on the respective Hilbert spaces from above.
Here,~$\innerprod{\cdot}{\cdot}_{\nu}$ denotes the natural scalar products on the weighted space~$L^2_{\nu}$, i.e.,~$\innerprod{f}{g}_{\nu} = \int_{\X}f(x)g(x)\nu(x)\,\mathrm{d}x$; the associated norm is denoted by~$\| \cdot \|_{\nu}$.
Due to the self-adjointness, the eigenvalues $ \lambda_i^\tau $ of $ \mathcal{P}^\tau $ and $ \T^\tau $ are real-valued and the eigenfunctions form an orthogonal basis with respect to $ \innerprod{\cdot}{\cdot}_{1/\mu} $ and $ \innerprod{\cdot}{\cdot}_\mu $, respectively.


Ergodicity implies that the dominant eigenvalue $ \lambda_1 $ is the only eigenvalue with absolute value $ 1 $ and we can thus order the eigenvalues so that
\begin{equation*}
    1 = \lambda_1^\tau > \lambda_2^t \ge \lambda_3^t \ge \dots.
\end{equation*}
The eigenfunction of~$\T^\tau$ corresponding to $ \lambda_1 = 1 $ is the constant function $ \phi_1 = \mathds{1}_{\X} $. Let $ \phi_i $ be the normalized eigenfunctions of $ \T^\tau $, i.e.~$ \innerprod{\phi_i}{\phi_j}_\mu = \delta_{ij} $, where~$\delta_{ij}$ denotes the Kronecker-delta.
Then any function~$ f \in L_{\mu}^2 $ can be written in terms of the eigenfunctions as $ f = \sum_{i=1}^\infty \innerprod{f}{\phi_{i}}_\mu \, \phi_i $. Applying $ \T^\tau $ thus results in
\begin{equation*}
    \T^\tau f = \sum_{i=1}^\infty \lambda_i^\tau \, \innerprod{f}{\phi_i}_\mu \, \phi_i.
\end{equation*}
For more details, we refer to~\cite{KNKWKSN17} and references therein.

For some $ k \in \mathbb{N} $, we call the $ k $ dominant eigenvalues $ \lambda_1^\tau, \dots, \lambda_k^\tau $ of $ \T^\tau $ the \emph{dominant spectrum} of $ \T^\tau $, i.e.,
\begin{equation*}
    \lambda_\text{dom}(\T^\tau) = \{ \lambda_1^\tau, \dots, \lambda_k^\tau \}.
\end{equation*}
Usually, $ k $ is chosen in such a way that there is a \emph{spectral gap} after $ \lambda_k^\tau $, i.e.~$ 1-\lambda_k^\tau \ll \lambda_k^\tau - \lambda_{k+1}^\tau $. The \emph{(implied) time scales} on which the associated dominant eigenfunctions decay are given by
\begin{equation} \label{eq:implied time scales}
    t_i = -\tau/\log(\lambda_i^\tau).
\end{equation}
If $ \{\T^t\}_{t\ge 0} $ is a semigroup of operators (which is the case for every time-homogeneous process, as, e.g., the transfer operator associated with~\eqref{eq:overdampedLangevin}), then there are $ \kappa_i \le 0 $ with $ \lambda_i^\tau = \exp(\kappa_i \tau) $ such that $t_i = -\kappa_i^{-1} $ holds. Assuming there is a spectral gap, the dominant time scales satisfy $ \infty = t_1 > \ldots \ge t_k \gg t_{k+1} $. These are the time scales of the \emph{slow} dynamical processes, also called \emph{rare events}, which are of primary interest in applications. The other, \emph{fast} processes are regarded as fluctuations around the relative equilibria (or \emph{metastable states}) between which the relevant slow processes travel.

\section{Markov state models for reversible systems in equilibrium}
\label{sec:MSMeq}

In the following, we will fix a lag time $\tau>0$, and drop the superscript $ \tau $ from the transfer operators for clarity of notation.

\subsection{Preliminaries on equilibrium Markov state models}

Generally, in the equilibrium case, a generalized MSM (GMSM) is any matrix $T_k \in \R^{n_k\times n_k}$, $n_k\ge k$, that approximates the~$k$ dominant time scales of~$\T$, i.e., its dominant eigenvalues;
\begin{equation}
\lambda_{\text{dom}}(T_k) \approx \lambda_{\text{dom}}(\T)\,.
\end{equation}
It is natural to ask for some structural properties of $\T$ to be reproduced by $T_k$, such as:
\begin{compactitem}
\item $\T$ is a positive operator $\longleftrightarrow$ all entries of $T_k$ are non-negative;
\item $\T$ is probability-preserving $\longleftrightarrow$ each column sum of $T_k$ is~1.
\end{compactitem}
These two properties together make $T_k$ to a stochastic matrix, and in this case~$T_k$ is usually called a MSM. We shall use the term Generalized MSM for a matrix $T_k$ that violates these requirements but still approximates the dominant spectral components of the underlying operator. Another structural property that one would usually ask for is to have apart from the time scales/eigenvalues also some approximation of the associated eigenvectors of~$\T$, as these are the dynamic observables related to the slow dynamics. This is incorporated in the general approach, which we discuss next.

The question is now \emph{how} to obtain a GMSM $T_k$ for a given~$\T$? To connect these objects, a natural and popular approach is to obtain the reduced model~$T_k$ via \emph{projection}. To this end, let~$\Q:L^2_{\mu}\to\set{V}\subset L^2_{\mu}$ be a projection onto a~$n_k$-dimensional subspace~$\set{V}$. The GMSM is then defined by the projected transfer operator
\begin{equation} \label{eq:MSMeq}
\T_k = \Q\T\Q\,;
\end{equation}
and $T_k$ can now be taken as the matrix representation of the $\set{V}$-restriction of the projected operator~$\T_k:\set{V}\to\set{V}$ with respect to a chosen basis of~$\set{V}$.

Is there a ``best'' choice for the projection? If we also ask for perfect approximation of the time scales, i.e.,~$\lambda_{\text{dom}}(T_k) = \lambda_{\text{dom}}(\T)$, the requirement of parsimony---such that the model size is minimal, i.e.,~$n_k=k$---leaves us with a unique choice for~$\set{V}$, namely the space spanned by the dominant (normalized) eigenfunctions~$\phi_i$ of~$\T$,~$i=1,\ldots, k$.
This follows from the so-called \emph{variational principle} (or Rayleigh--Ritz method)~\cite{NoNu13,NuEtAl14}. In fact, it makes a stronger claim: every projection to a $k$-dimensional space~$\set{V}'$ yields a GMSM~$\T_k': \set{V}' \to \set{V}'$ which underestimates the dominant time scales, i.e.,~$\lambda_i(\T_k') \le \lambda_i(\T)$,~$i=1,\ldots,k$; and equality holds only for the projections on the eigenspaces.

Note that the discussion about the time scales (equivalently, the eigenvalues) involves only the \emph{range} of the projection, the space~$\set{V}$.
However, there are multiple ways to project on the space~$\set{V}$. It turns out, that the $\mu$-orthogonal projection given by
\begin{equation} \label{eq:projeq}
\Q f = \sum_{i=1}^k \innerprod{\phi_i}{f}_{\mu}\, \phi_i
\end{equation}
is superior to all of them, if we consider a \emph{stronger} condition than simply reproducing the dominant time scales. This condition is the requirement of minimal propagation error, and it will be central to our generalization of GMSMs for non-equilibrium, or even time-inhomogeneous systems.
Let us define the best $k$-dimensional approximation~$\T_k$ to~$\T$, i.e., the best projection~$\Q$, as the rank-$k$ operator satisfying
\begin{equation} \label{eq:normestim}
\|\T-\T_k\| \le \| \T-\T_k'\| \,,
\end{equation}
where~$\|\cdot\|$ denotes the induced operator norm\footnote{The induced norm of an operator~$\A : \set{X} \to \set{Y}$ is defined by~$\| \A \| = \max_{\|f\|_{\set{X}} = 1}\|\A f\|_{\set{Y}}$, where~$\|\cdot\|_{\set{X}}$ and~$\|\cdot\|_{\set{Y}}$ are the norms on the spaces~$\set{X}$ and~$\set{Y}$, respectively.} for operators mapping~$L^2_{\mu}$ to itself.
Equivalently, this can be viewed as a result stating that~$\T_k$ is the $k$-dimensional approximation of~$\T$ yielding the smallest (worst-case) error in density propagation:
\begin{equation} \label{eq:densprop}
\T_k = \argmin_{\substack{\T_k' = \Q' \T  \Q' \\ \text{rank}\,\Q'=k}} \max_{\|f\|_\mu = 1}  \| \T  f-\T_k' f \|_\mu \,,
\end{equation}
where $x_* = \argmin_x h(x)$ means that~$x_*$ minimizes the function~$h$, possibly subject to constraints that are listed under $\argmin$.

To summarize, the best GMSM~\eqref{eq:MSMeq} in terms of~\eqref{eq:normestim} (or, equivalently,~\eqref{eq:densprop}) is given by the projection~\eqref{eq:projeq}. This follows from the self-adjointness of~$\T$ and the Eckard--Young theorem; details can be found in~\cite{WuNo17} and in Appendix~\ref{app:E-Ythm}.
Caution is needed however, when interpreting~$\T_k f$ as the propagation of a given probability density~$f$. The projection to the dominant eigenspace in general does not respect positivity (i.e., $f\ge 0 \nRightarrow \T_kf\ge 0$), thus~$\T_k f$ loses its probabilistic meaning. This is the price to pay for the perfectly reproduced dominant time scales. We can retain a physical interpretation of a MSM if we accept that the dominant time scales will be slightly off, as we discuss in the next section.

\subsection{Metastable sets}
\label{ssec:metsets}

There is theoretical evidence~\cite{SchSa13} that the more pronounced the metastable behavior of system is (in the sense that the size of the time scale gap~$ t_1 \ge \ldots \ge t_k \gg t_{k+1} $ is large), the more constant the dominant eigenfunctions $\phi_i$ are on the metastable sets~$\set{M}_1,\ldots,\set{M}_d$, given the lag time with respect to which the transfer operator~$\T  = \T^\tau$ is taken satisfies~$\tau \gg t_{k+1}$. Assuming such a situation, the eigenfunctions of~$\T$ can approximately be combined from the characteristic functions over the metastable sets, i.e., with the abbreviation $\one_i := \one_{\set{M}_i}$ it holds that
\begin{equation} \label{eq:EVlincomb}
\phi_i \approx \sum_{j=1}^k c_{ij}\one_j =: \widehat{\phi}_i \,,
\end{equation}
where the~$c_{ij}$ are components of the linear combination, such that the $\widehat{\phi}_i$ are orthonormal. Using the ``approximate eigenfunctions''~$\widehat{\phi}_i$ defined in~\eqref{eq:EVlincomb}, the modified projection
\begin{equation} \label{eq:met_proj}
\widehat{\Q}f = \sum_{i=1}^k \innerprod{\widehat{\phi}_i}{f}_{\mu} \widehat{\phi}_i
\end{equation}
defines a new MSM~$\widehat{\T}_k := \widehat{\Q} \T  \widehat{\Q}$. Since $\set{V} = \text{span}\{\phi_i\} \approx \text{span}\{ \widehat{\phi}_i\} = \widehat{\set{V}}$, also~$\widehat{\Q} \approx \Q$, and thus we have~$\widehat{\T}_k \approx \T_k$. This implies~\cite[Lemma~3.5]{BitEtAl17} that also their dominant eigenvalues, hence time scales are close. Further, we have that in the basis $\{ \one_i / \innerprod{\one_i}{\one_i}_\mu \}_{i=1}^k$ the matrix representation~$\widehat{T}_k$ of the $\widehat{\set{V}}$-restriction of the operator~$\widehat{\T}_k$ has the entries
\begin{equation} \label{eq:met_transrates}
\begin{aligned}
\widehat{T}_{k,ij} &= \frac{\innerprod{\one_i}{\T\one_j}_{\mu}}{\innerprod{\one_j}{\one_j}_{\mu}} \\
&=  \int_{\set{M}_i} \T\left(\frac{\one_j}{\innerprod{\one_j}{\one_j}_{\mu}}\right)\mu(x)dx \\
&= \frac{1}{\prob_\mu [x_0\in\set{M}_j]} \int_{\set{M}_i}  \int_{\set{M}_j} \mu(x) p^t(x,y)\,dx\,dy \\
&= \prob_{\mu} [x_t \in\set{M}_i\,\big\vert\, x_0 \in\set{M}_j]\,,
\end{aligned}
\end{equation}
where~$\prob_\mu[\,\cdot\,\vert\, x\in\set{M}]$ denotes the probability measure that arises if~$x\in\set{M}$ has distribution~$\mu$ (restricted to~$\set{M}$). That is,~$\widehat{T}_k$ has the transition probabilities between the metastable sets as entries, giving a direct physical interpretation of the MSM.  Note, however, that for this approximation to reproduce the dominant time scales well, i.e., to have~$t_i \approx  \widehat{t}_i$, $i=1,\ldots,k$, we need a strong separation of time scales in the sense that~$t_k \gg t_{k+1}$ has to hold, and the lag time~$ \tau $ needs to be chosen sufficiently large~\cite{sarich2010approximation}.

\subsection{Example: stationary diffusion in double-well potential}
\label{ssec:stationaryDW}

Let us consider the diffusion~\eqref{eq:overdampedLangevin} in the potential landscape~$W(x) = (x^2-1)^2$ with~$\beta = 5$; cf.~Figure~\ref{fig:dw_stationary} (left). With the lag time~$\tau=10$ we approximate the Perron--Frobenius operator~$\P = \P^t$ and compute its eigenvector~$\mu$ at the eigenvalue~$\lambda_1=1$. Then, we compute the transfer operator~$\T = \T^\tau$ with respect to the stationary distribution~$\mu$, and its dominant eigenvalues $\lambda_2,\lambda_3,\ldots$ and corresponding eigenvectors~$\phi_2,\phi_3,\ldots$ (Figure~\ref{fig:dw_stationary}, right). While $\lambda_2 = 0.888$, we have~$|\lambda_3| < 10^{-12}$, hence we have a clear time scale separation,~$t_2 = 84.1 \gg 0.35 = t_3$, cf.~\eqref{eq:implied time scales}.
\begin{figure}[htb]
\centering
\includegraphics[width = 0.394\textwidth]{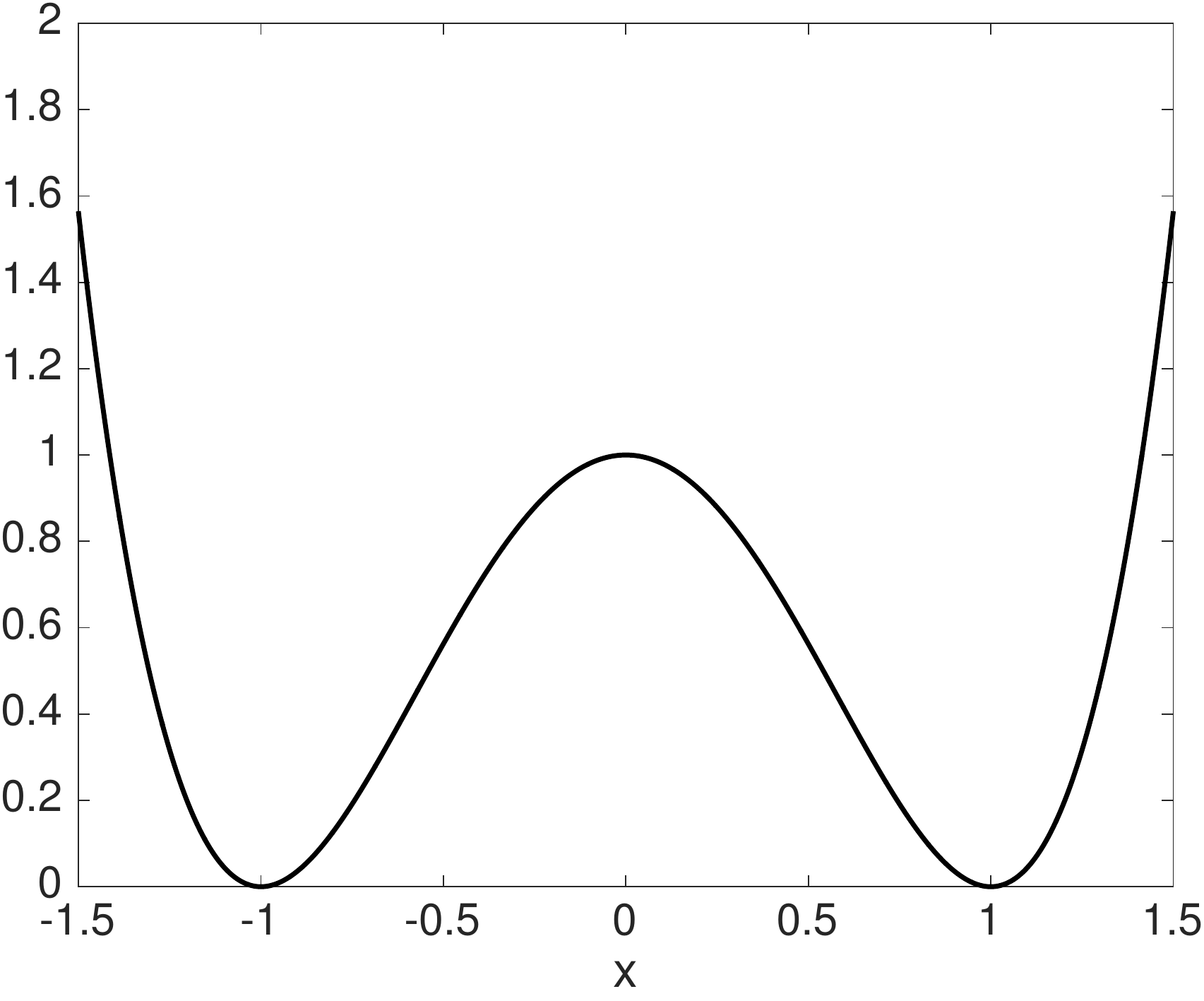}
\hspace*{1cm}
\includegraphics[width = 0.4\textwidth]{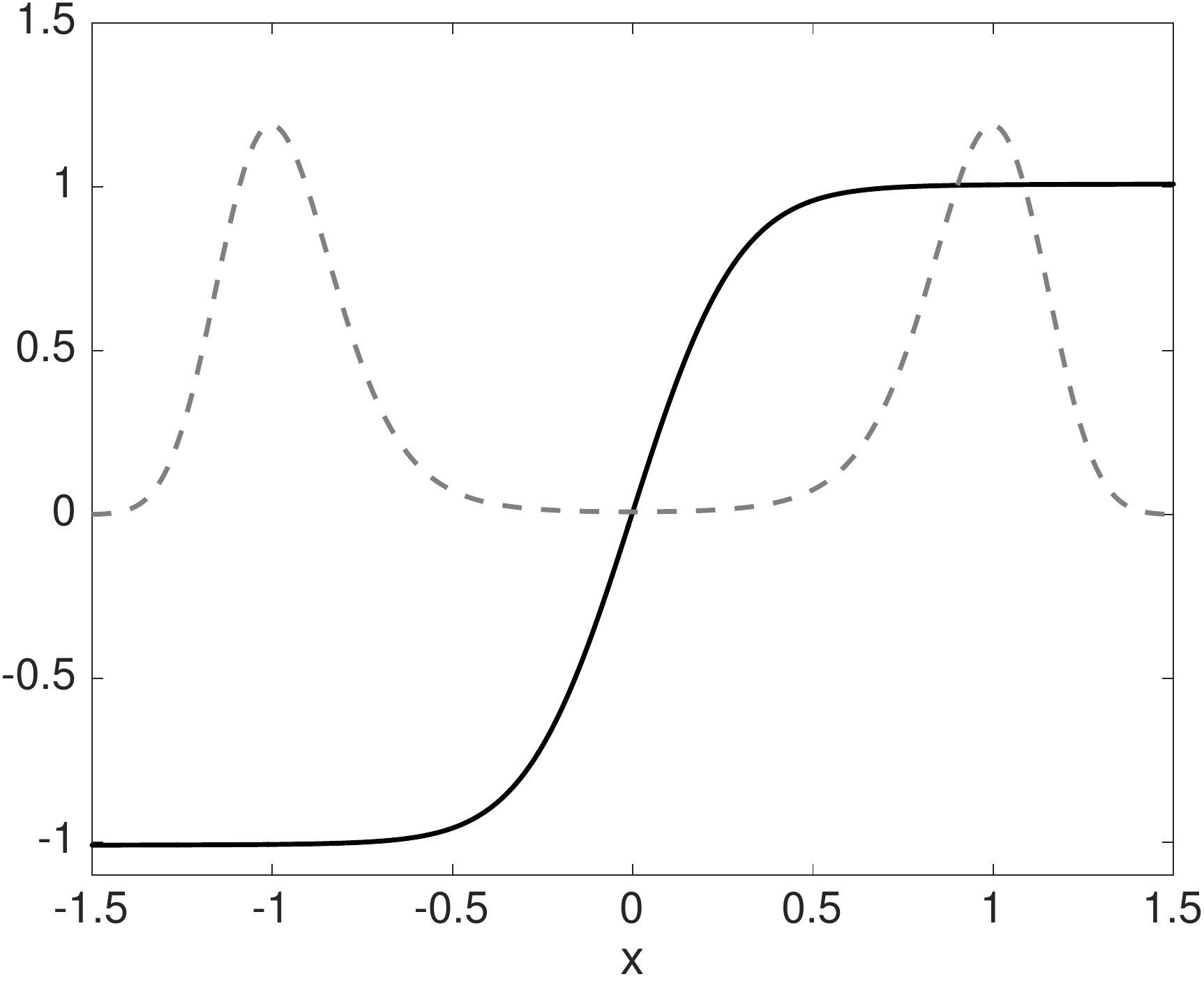}

\caption{Left: double-well potential. Right: invariant distribution $\mu$ (gray dashed) and second eigenfunction~$\phi_2$ (solid black) of the associated transfer operator~$\T$.}
\label{fig:dw_stationary}
\end{figure}
Thus, we expect a rank-2 MSM to recover the dominant time scales very well. Indeed, choosing $\set{M}_1 = (-\infty,0]$ and~$\set{M}_2 = [0,\infty)$ gives~$\phi_2 \approx -\one_{\set{M}_1} + \one_{\set{M}_2}$, and we obtain by~\eqref{eq:met_transrates} that
\[
\widehat{T}_2 =
\begin{pmatrix}
0.943 & 0.057 \\
0.057 & 0.943
\end{pmatrix}.
\]
This is a stochastic matrix with eigenvalues~$\widehat{\lambda}_1 = 1$ and~$\widehat{\lambda}_2 = 0.886$, i.e., yielding an approximate time scale~$\widehat{t}_2 = 82.4$.

\section{Markov state models for time-inhomogeneous systems}
\label{sec:MSMneq}

As all our non-equilibrium cases will be special instances of the most general, time-inhomogeneous case, we consider this next.

\subsection{Minimal propagation error by projections}
\label{ssec:PEneq}

\paragraph{Conceptual changes.}

The above approach to Markov state modeling is relying on the existence of an stationary distribution and reversibility. In the case of a time-inhomogeneous system there will not be, in general, any stationary distribution~$\mu$.
Additionally, we are lacking physical meaning, since it is unclear with respect to which ensemble the dynamical fluctuations should be described.
From a mathematical perspective there is a problem as well, since the construction relies on the reversibility of the underlying system, which gives the self-adjointness of the operator~$\T$ with respect to the weighted scalar product~$\innerprod{\cdot}{\cdot}_\mu$. Time-inhomogeneous systems are not reversible in general.

Additionally to these \emph{structural} properties, we might need to depart from some \emph{conceptional} ones as well. As time-inhomogeneity usually stems from an external forcing that might not be present or known for all times, we need a description of the system on a finite time interval. This disrupts the concept of dominant time scales as they are considered in equilibrium systems, because there it relies on self-similarity of observing an eigenmode over and over for arbitrary large times. It also forces us to re-visit the concept of metastability for two reasons. First, many definitions of metastability rely on statistics under the assumption that we observe the system for infinitely long times. Second, as an external forcing may theoretically arbitrarily distort the energy landscape, it is a priori unclear what could be a \emph{metastable set}.

As a remedy, we aim at another property when trying to reproduce the effective behavior of the full system by a reduced model; this will be minimizing the propagation error, as in~\eqref{eq:densprop}. Remarkably, this will also allow for a physical interpretation through so-called \emph{coherent sets}; analogous to metastable sets in the equilibrium case.

A prototypical time-inhomogeneous system can be given by
\begin{equation}
    \mathrm{d} x_t = -\nabla W(t,x_t)\,\mathrm{d}t + \sqrt{2\beta^{-1}} \, \mathrm{d}w_t\,,
    \label{eq:overdampedLangevinInhom}
\end{equation}
where the potential~$W$ now depends explicitly on time~$t$. In this case, a lag time $\tau$ is not sufficient to parametrize the statistical evolution of the system, because we need to know when we start the evolution. Thus, transition density functions need two time parameters, e.g., $p^{s,t}(x,\cdot)$ denotes the distribution of $x_t$ conditional to~$x_s=x$. Similarly, the transfer operators~$\P,\T,\K$ are parametrized by two times as well, e.g., $\P^{s,t}$ propagates probability densities from initial time $s$ to final time~$t$ (alternatively, from initial time $s$ for lag time~$\tau = t-s$). To simplify notation, we will settle for some initial and final times, and drop these two time parameters, as they stay fixed.

\paragraph{Adapted transfer operators.}

Let us observe the system from initial time $t_0$ to final time~$t_1$, such that its distribution at initial time is given by~$\mu_0$. Then, if~$\P$ denotes the propagator of the system from $t_0$ to $t_1$, then we can express the final distribution at time $t_1$ by~$\mu_1 = \P \mu_0$. As the transfer operator in equilibrium case was naturally mapping~$L^2_\mu$ to itself (because $\mu$ was invariant), here it is natural to consider the transfer operator mapping densities (functions) with respect to~$\mu_0$ to densities with respect to~$\mu_1$. Thus, we define the transfer operator~$\T: L^2_{\mu_0} \to L^2_{\mu_1}$ by
\begin{equation} \label{eq:Tneq}
\T u:= \frac{1}{\mu_1} \P\left(u \mu_0\right),
\end{equation}
which is the non-equilibrium analogue to~\eqref{eq:PT_identity}.
This operator naturally retains some properties of the equilibrium transfer operator~\cite{Den17}:
\begin{compactitem}
\item
$\T\one = \one$, encoding the property that $\mu_0$ is mapped to $\mu_1$ by the propagator $\P$.
\item
$\T$ is positive and integral-preserving, thus $\sigma_{\max}(\T)=1$.
\item
Its adjoint is the Koopman operator~$\K:L^2_{\mu_1}\to L^2_{\mu_0}$, $\K g(x) = \expect[g(x_t)\,\vert x_0=x]$.

\end{compactitem}

\paragraph{An optimal non-stationary GMSM.}

As already mentioned above, it is not straightforward how to address the problem of Markov state modeling in this time-inhomogeneous case via descriptions involving time scales or metastability. Instead, our strategy will be to search for a rank-$k$ projection~$\T_k$ of the transfer operator~$\T$ with minimal propagation error, to be described below.

The main point is now that due to the non-stationarity the \emph{domain} $L^2_{\mu_0}$ (where $\T$ maps from) and \emph{range} $L^2_{\mu_1}$ (where $\T$ maps to) of the transfer operator~$\T$ are different spaces, hence it is natural to choose different rank-$k$ subspaces as domain and range of~$\T_k$ too. In fact, it is necessary to choose domain and range differently, since $f \in L^2_{\mu_0}$ has a different meaning than $f \in L^2_{\mu_1}$. Thus, we will search for projectors~$\Q_0:L^2_{\mu_0}\to\set{V}_0 \subset L^2_{\mu_0}$ and~$\Q_1: L^2_{\mu_1}\to \set{V}_1 \subset L^2_{\mu_1}$ on different $k$-dimensional subspaces~$\set{V}_0$ and~$\set{V}_1$, respectively, such that the reduced operator
\begin{equation} \label{eq:MSMneq}
\T_k := \Q_1\T\Q_0
\end{equation}
has essentially optimal propagation error. In quantitative terms, we seek to solve the optimization problem
\begin{equation} \label{eq:denspropneq}
\T_k = \argmin_{\substack{\T_k' = \Q_1' \T \Q_0' \\ \text{rank}\,\Q_0'=k \\ \text{rank}\,\Q_1'=k}} \max_{\|f\|_{\mu_0} = 1}  \| \T f-\T_k' f \|_{\mu_1}
\quad \text{or, equivalently} \quad
\T_k = \argmin_{\substack{\T_k' = \Q_1' \T \Q_0' \\ \text{rank}\,\Q_0'=k \\ \text{rank}\,\Q_1'=k}}  \| \T-\T_k' \| \,,
\end{equation}
where~$\|\cdot\|$ denotes the induced operator norm of operators mapping~$L^2_{\mu_0}$ to~$L^2_{\mu_1}$.

As an implication of the Eckart--Young theorem~\cite[Theorem 4.4.7]{HsEu15}, the solution of~\eqref{eq:denspropneq} can explicitly be given through singular value decomposition of~$\T$; yielding the variational approach for Markov processes (VAMP)~\cite{WuNo17}. More precisely, the~$k$ largest singular values~$\sigma_1 \ge \ldots \ge \sigma_k$ of~$\T$ have right and left singular vectors~$\phi_i,\psi_i$ satisfying~$\innerprod{\phi_i}{\phi_j}_{\mu_0} = \delta_{ij},  \innerprod{\psi_i}{\psi_j}_{\mu_1} = \delta_{ij}$, respectively, i.e.,~$\T\phi_i = \sigma_i\psi_i$. Choosing
\begin{equation} \label{eq:neqproj}
\Q_0 f = \sum_{i=1}^k \innerprod{\phi_i}{f}_{\mu_0} \phi_i \quad \text{and} \quad \Q_1 g = \sum_{i=1}^k \innerprod{\psi_i}{g}_{\mu_1} \psi_i
\end{equation}
solves~\eqref{eq:denspropneq}, see Appendix~\ref{app:E-Ythm}.

\subsection{Coherent sets}
\label{ssec:cohsets}

Similarly to the reversible equilibrium case with pronounced metastability in section~\ref{ssec:metsets}, it is also possible in the time-inhomogeneous case to give our GMSM~\eqref{eq:MSMneq} from section~\ref{ssec:PEneq} a physical interpretation---under some circumstances.

In the reversible equilibrium situation, recall from~\eqref{eq:EVlincomb} that in the case of sufficient time scale separation the eigenfunctions are almost constant on metastable sets. In the time-inhomogeneous situation, considered now, we have just shown that the role played before by the eigenfunctions is taken by left- and right singular functions. Thus, let us assume for now that there are \emph{two} collections of sets,~$\set{M}_{0,1},\ldots,\set{M}_{0,k}$ at initial time, and~$\set{M}_{1,1},\ldots,\set{M}_{1,k}$ at final time, such that
\begin{equation} \label{eq:SV_lincomb}
\phi_i \approx \sum_{j=1}^k c_{ij}\one_{0,j} =: \widehat{\phi}_i
\qquad\text{and}\qquad
\psi_i \approx \sum_{j=1}^k d_{ij}\one_{1,j} =: \widehat{\psi}_i
\end{equation}
holds with appropriate scalars~$c_{ij}$ and~$d_{ij}$, where we used the abbreviation~$\one_{0,i} = \one_{\set{M}_{0,i}}$ and~$\one_{1,i} = \one_{\set{M}_{1,i}}$. That means, dominant right singular functions~$\phi_i$ are almost constant on the sets~$\set{M}_{0,j}$, and dominant left singular functions~$\psi_i$ are almost constant on the sets~$\set{M}_{1,j}$. In analogy to~\eqref{eq:met_proj}, we modify the projections~$\Q_0,\Q_1$ from~\eqref{eq:neqproj} to~$\widehat{\Q}_0: L^2_{\mu_0} \to \widehat{\set{V}}_0,\widehat{\Q}_1: L^2_{\mu_1} \to \widehat{\set{V}}_1$ by using $\widehat{\phi}_i$ and~$\widehat{\psi}_i$ instead of $\phi_i$ and~$\psi_i$, respectively, and define the modified GMSM by~$\widehat{\T}_k = \widehat{\Q}_1 \T \widehat{\Q}_0$. An analogous computation to~\eqref{eq:met_transrates} yields for the matrix representation~$\widehat{T}_k$ of the restriction $\widehat{\T}_k: \widehat{\set{V}}_0 \to \widehat{\set{V}}_1$ with respect to the bases~$\{ \one_{0,i} / \innerprod{\one_{0,i}}{\one_{0,i}}_{\mu_0} \}_{i=1}^k$ and~$\{ \one_{1,i} / \innerprod{\one_{1,i}}{\one_{1,i}}_{\mu_1} \}_{i=1}^k$ that
\begin{equation} \label{eq:coh_transrates}
\widehat{T}_{k,ij} = \prob_{\mu_0}\left[ x_{t_1} \in \set{M}_{1,i} \,\vert\, x_{t_0} \in \set{M}_{0,j} \right].
\end{equation}
In other words, the entries of $\widehat{T}_k$ contain the transition probabilities from the sets~$\set{M}_{0,i}$ (at initial time) into the sets~$\set{M}_{1,j}$ (at final time). Thus, $\widehat{T}_k$ has the physical interpretation of a MSM, with the only difference to the reversible stationary situation being that the ``metastable'' sets at initial and final time are different. This can be seen as a natural reaction to the fact that in the time-inhomogeneous case the dynamical environment (e.g., the potential energy landscape governing the dynamics of a molecule) can change in time.

\begin{figure}[htb]
\centering

\includegraphics[width = 0.8\textwidth]{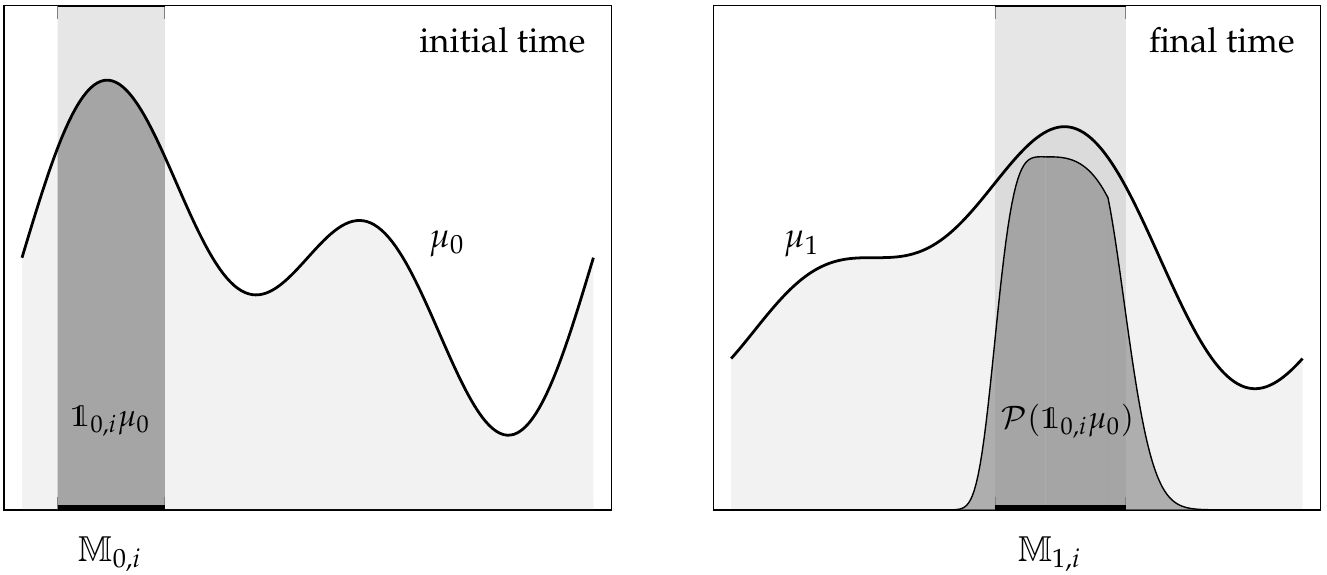}

\caption{Cartoon representation of \eqref{eq:coherence} with a coherent pair~$\set{M}_{0,i}, \set{M}_{1,i}$, which are represented by the thick horizontal lines left and right, respectively. Condition~\eqref{eq:coherence} can be translated into $\T\one_{0,i} \approx \one_{1,i}$, or equivalently $\P(\one_{0,i}\mu_0) \approx \one_{1,i}\mu_1$.
In other words, the part of the ensemble~$\mu_0$ supported on the set $\set{M}_{0,i}$ (dark gray region on the left) is mapped by the propagator to an ensemble (dark gray region on the right) that is almost equal to the part of the ensemble~$\mu_1$ supported on~$\set{M}_{1,i}$. Note that little of $\P(\one_{0,i}\mu_0)$ is supported outside of~$\set{M}_{1,i}$, \emph{and} little of $\one_{1,i}\mu_1$ came from outside~$\set{M}_{0,i}$. }
\label{fig:cohset cartoon}
\end{figure}

It remains to discuss when does~\eqref{eq:SV_lincomb} actually hold true. It is comprehensively discussed in~\cite{KoCiSch16} that a sufficient condition for~\eqref{eq:SV_lincomb} is if
\begin{equation} \label{eq:coherence}
\prob_{\mu_0}\left[ x_{t_1} \in \set{M}_{1,i} \,\vert\, x_{t_0} \in \set{M}_{0,i} \right] \approx 1
\qquad\text{and}\qquad
\prob_{\mu_1}\left[ x_{t_0} \in \set{M}_{0,i} \,\vert\, x_{t_1} \in \set{M}_{1,i} \right] \approx 1
\end{equation}
holds for~$i=1,\ldots,k$. Eq.~\eqref{eq:coherence} says that if the process starts in $\set{M}_{0,i}$, it ends up at final time with high probability in~$\set{M}_{1,i}$, and that if the process ended up in~$\set{M}_{1,i}$ at final time, in started with high probability in~$\set{M}_{0,i}$; see Figure~\ref{fig:cohset cartoon}. This can be seen as a generalization of the metastability condition from section~\ref{ssec:metsets} that allows for an efficient low-rank Markov modeling in the time-homogeneous case. The pairs of sets~$\set{M}_{0,i}, \set{M}_{1,i}$ are called \emph{coherent (set) pair}, and they have been shown to be very effective tools identifying \emph{time-dependent} regions in non-autonomous flow fields that do not mix with their surrounding\footnote{This is, effectively, what~\eqref{eq:coherence} says.}, e.g., moving vortices in atmospheric and oceanographic applications~\cite{FrSaMo10,FHRSS12,Froyland2013,FrEtAl15}. More details on the generalization of the concept of metastability by coherent sets, and on subsequent Markov state modeling can be found in~\cite{KoCiSch16}.

\subsection{Example: diffusion in shifting triple-well potential}

Let us consider the diffusion~\eqref{eq:overdampedLangevin} in the time-dependent potential landscape
\[
W(t,x) = 7\left( (x-t/10)(x-1-t/10)(x+1-t/10)\right)^2
\]
with~$\beta = 5$ and on the time interval~$[t_0,t_1] = [0,10]$; cf.~Figure~\ref{fig:tw_shift_potevecs} (left). Taking the initial distribution~$\mu_0 \propto \exp(-\beta W(0,\cdot))$, we build the transfer operator~\eqref{eq:Tneq}, and consider its dominant singular values:
\[
\sigma_1 = 1.000,\quad
\sigma_2 = 0.734,\quad
\sigma_3 = 0.536,\quad
\sigma_4\approx 0\,.
\]
This indicates that a rank-3 GMSM is sufficient to approximate the system, and that we have three coherent sets. We observe the characteristic almost constant behavior~\eqref{eq:SV_lincomb} of the left and right singular vectors over the respective coherent sets; Figure~\ref{fig:tw_shift_potevecs} (middle and right). Recall that right singular vectors show coherent sets at initial time, and left singular vectors the associated coherent sets at final time.

\begin{figure}[htb]
\centering

\includegraphics[width = 0.335\textwidth]{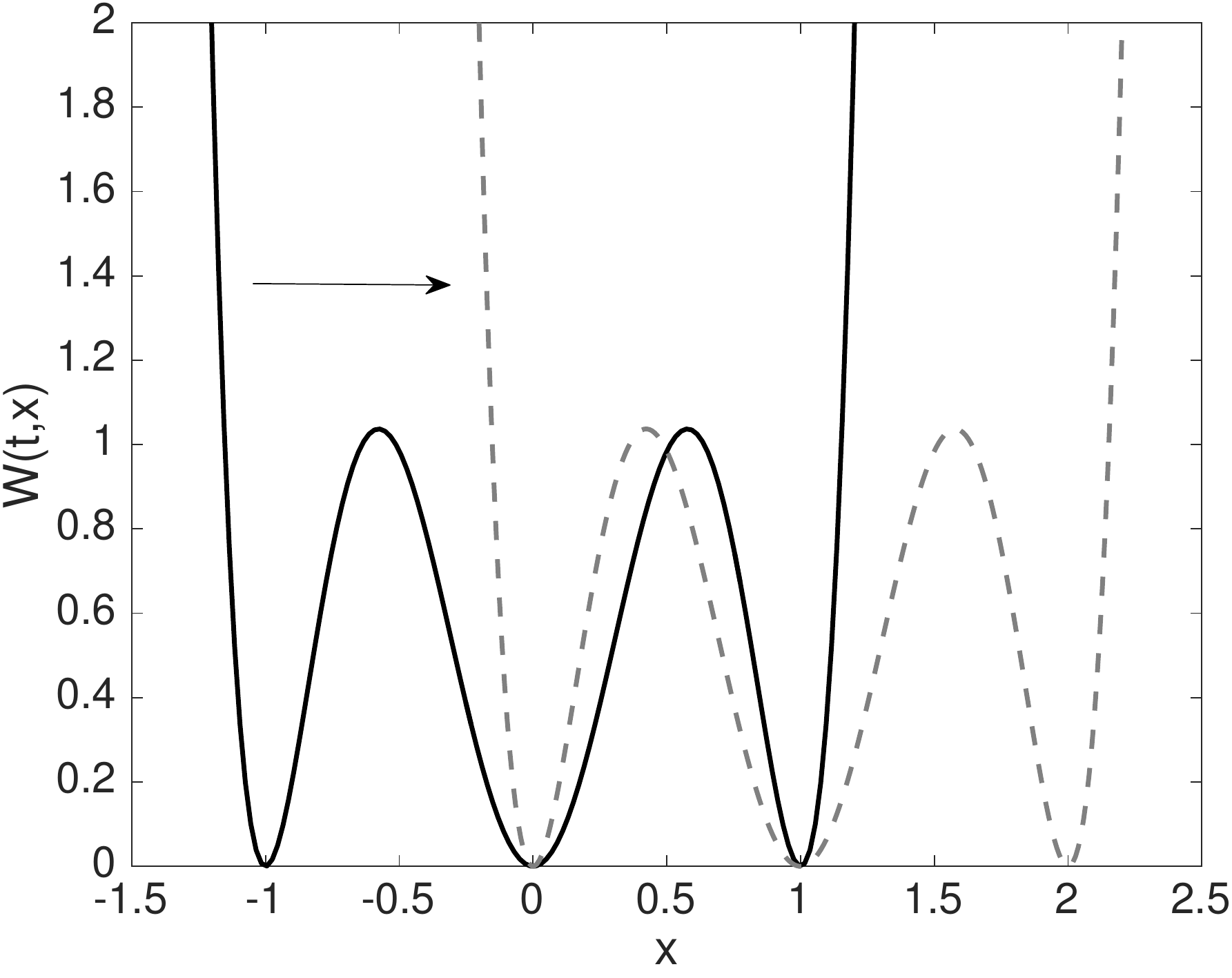}
\hfill
\includegraphics[width = 0.32\textwidth]{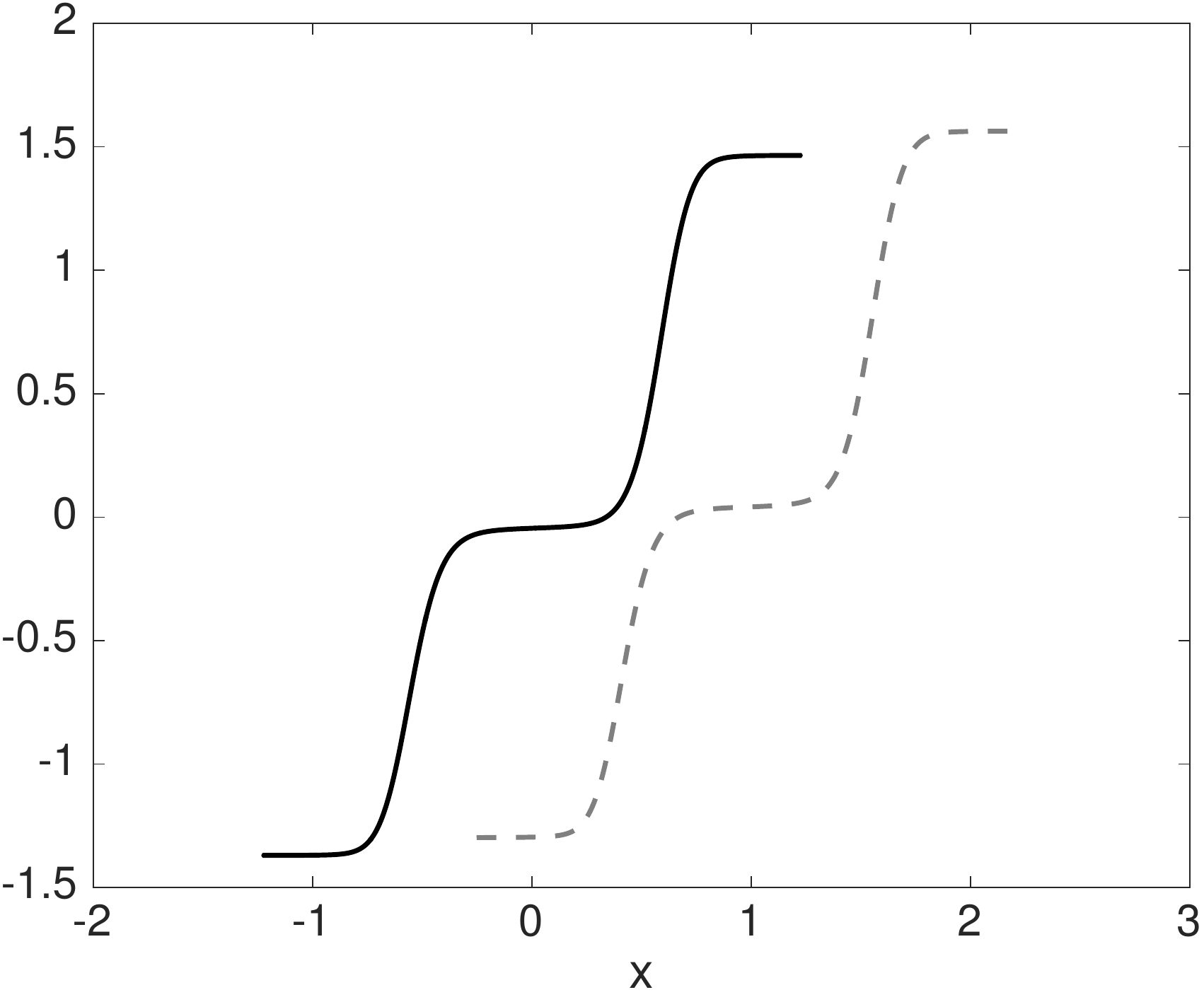}
\hfill
\includegraphics[width = 0.32\textwidth]{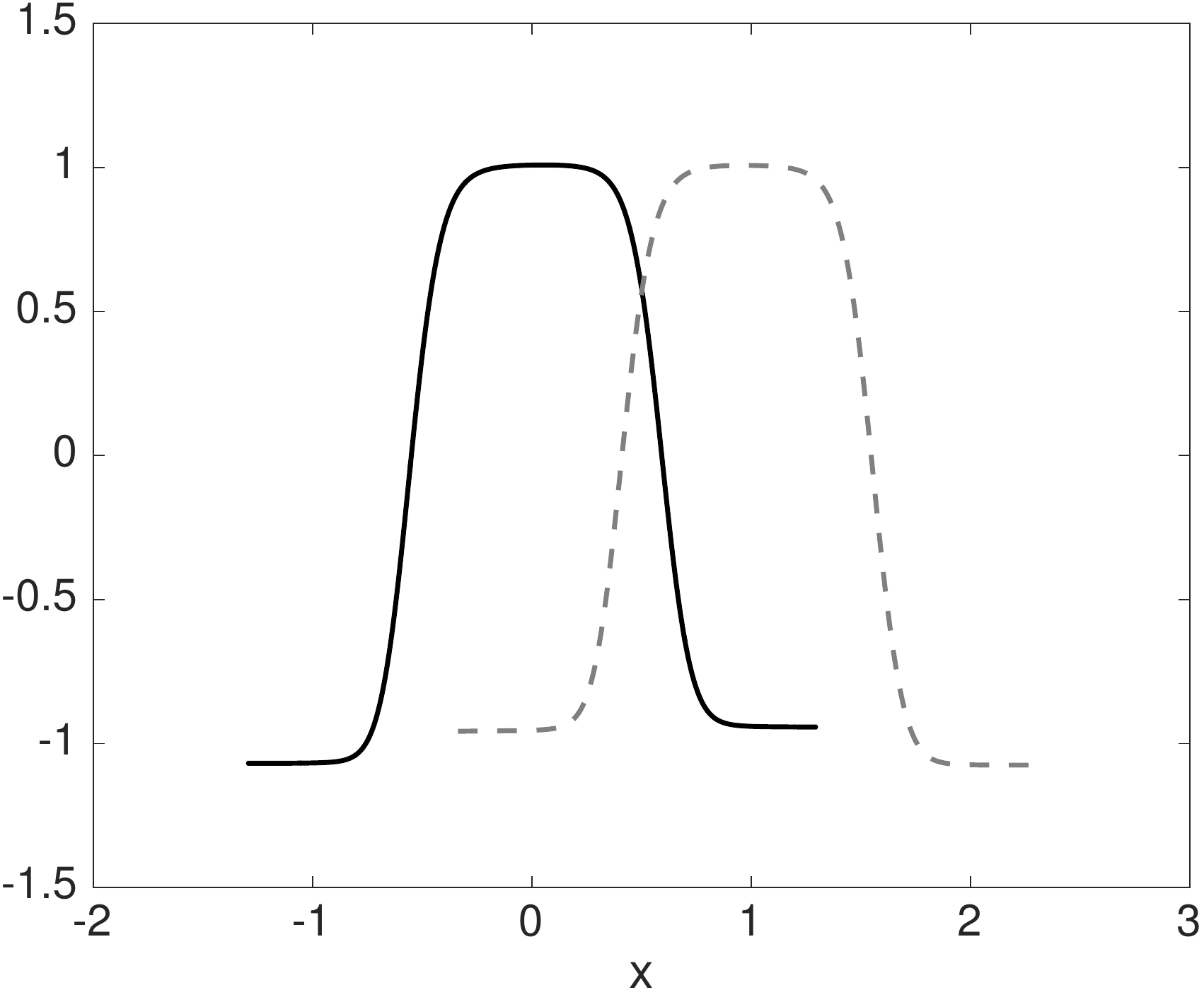}

\caption{Left: shifting triple-well potential. All three wells are coherent sets, as the plateaus of the singular vectors indicate. Middle: second right (initial) and left (final) singular vectors of the transfer operator (solid black and gray dashed lines, respectively). Right: third right (initial) and left (final) singular vectors of the transfer operator (solid black and gray dashed lines, respectively). The singular vectors are for reasons of numerical stability only computed in regions where $\mu_0$ and $\mu_1$ are, respectively, larger than machine precision.}
\label{fig:tw_shift_potevecs}
\end{figure}

We can identify the three wells as three coherent sets. Figure~\ref{fig:tw_shift_ensembles} shows that they are coherent indeed: the respective parts of the initial ensemble~$\mu_0$ is to a large extent mapped onto the corresponding part of the final ensemble~$\mu_1$, cf.~Figure~\ref{fig:cohset cartoon} and~\eqref{eq:coherence}.
\begin{figure}[htb]
\centering

\includegraphics[width = 0.3\textwidth]{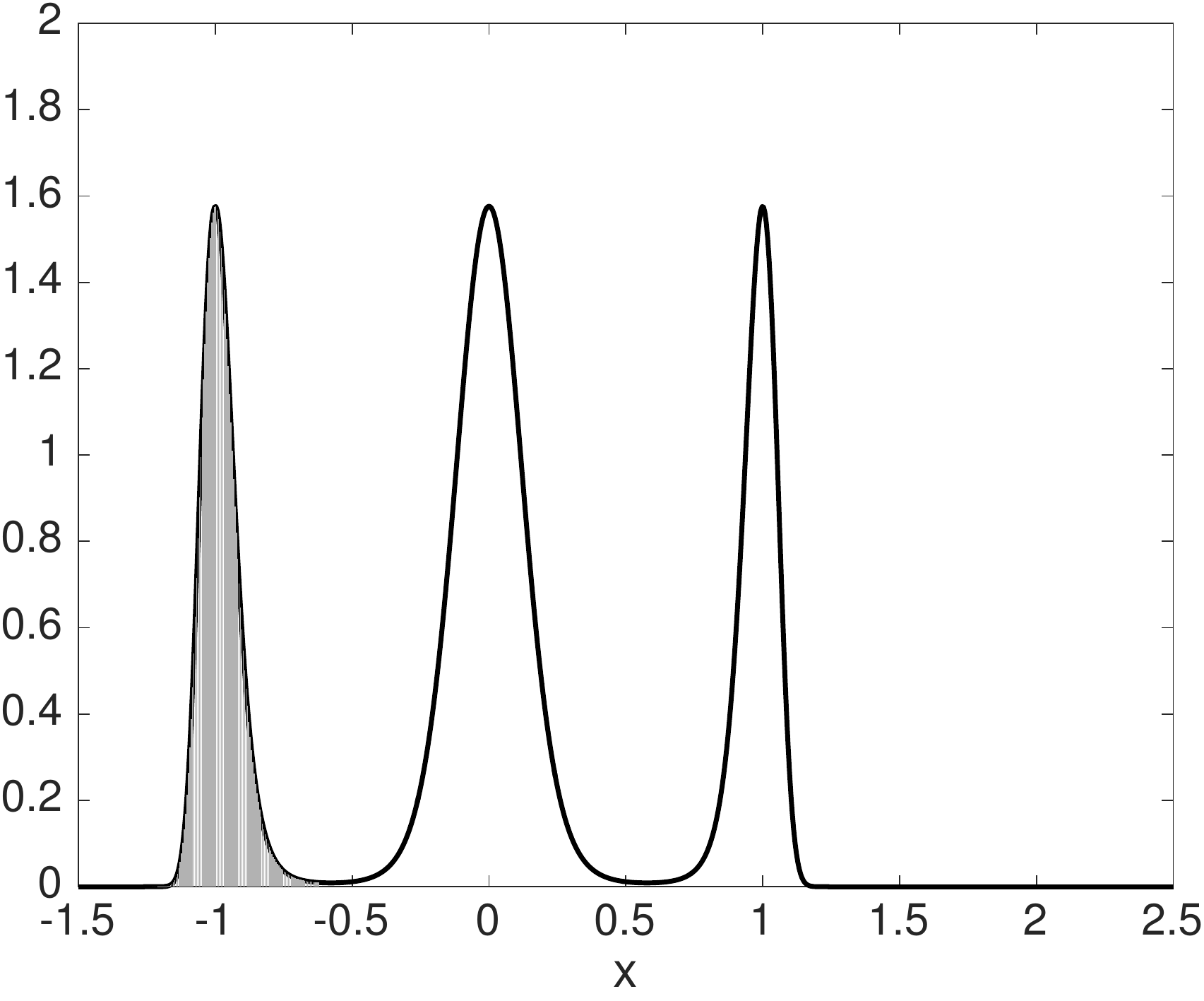}
\hfill
\includegraphics[width = 0.3\textwidth]{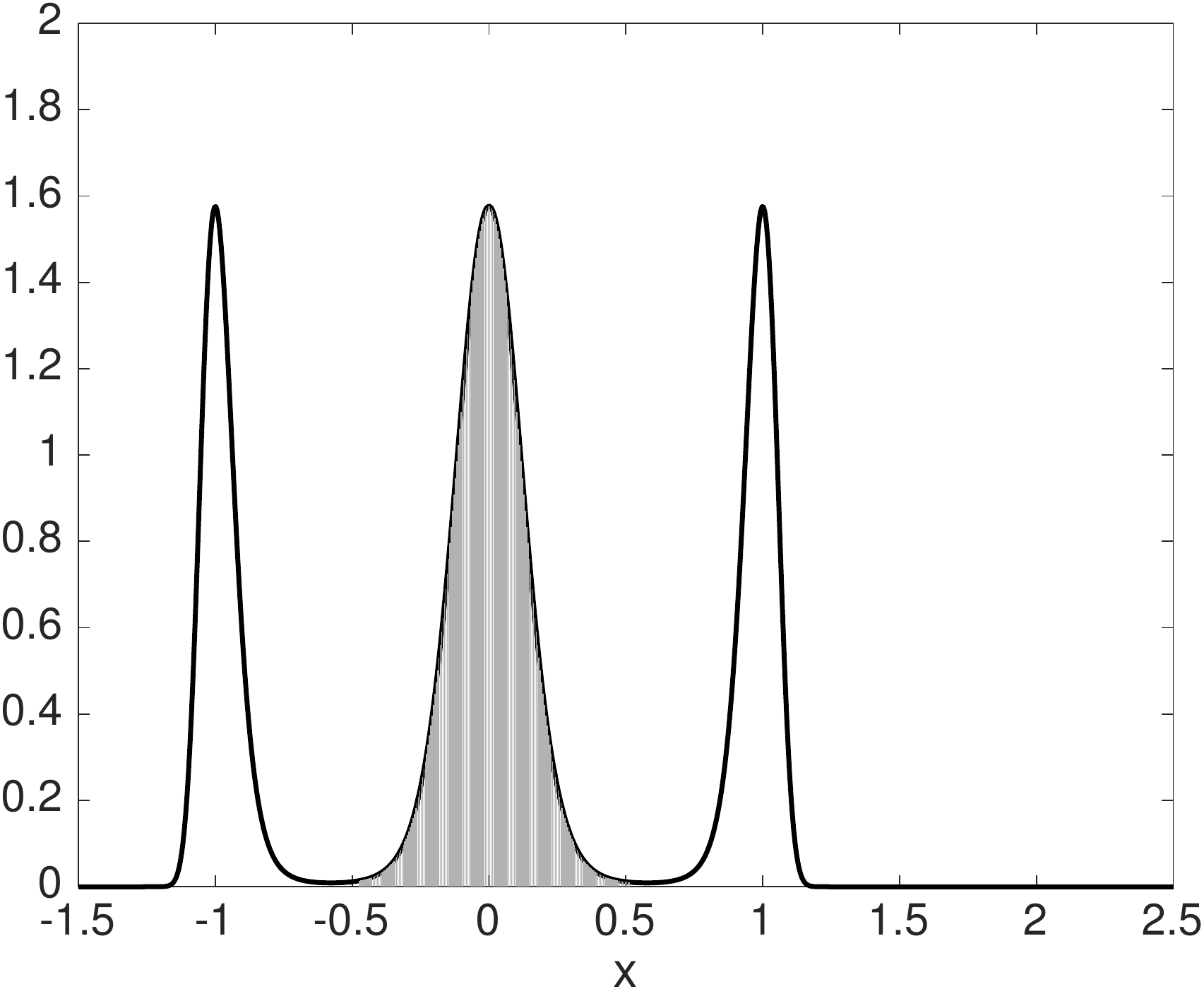}
\hfill
\includegraphics[width = 0.3\textwidth]{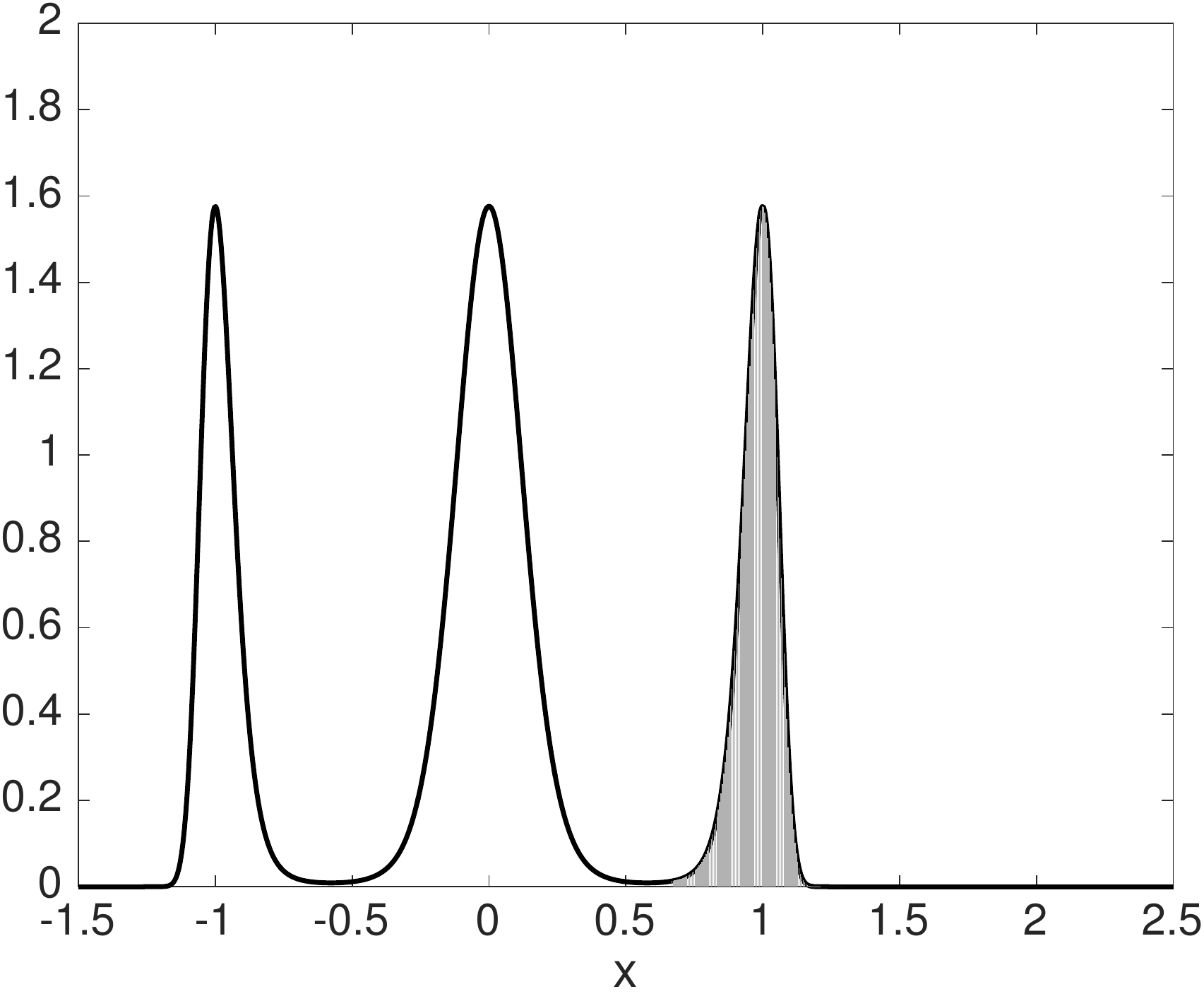}

\includegraphics[width = 0.3\textwidth]{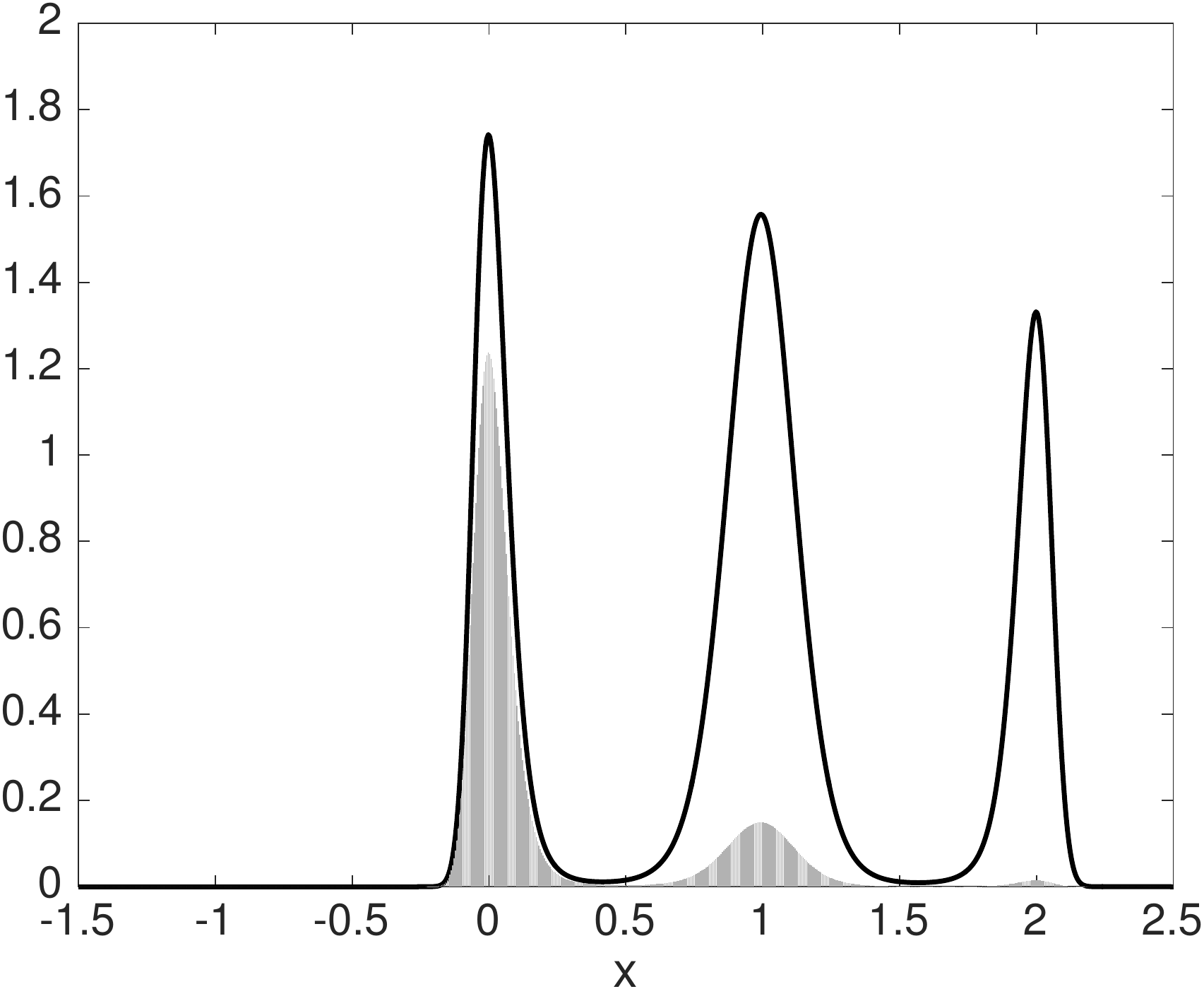}
\hfill
\includegraphics[width = 0.3\textwidth]{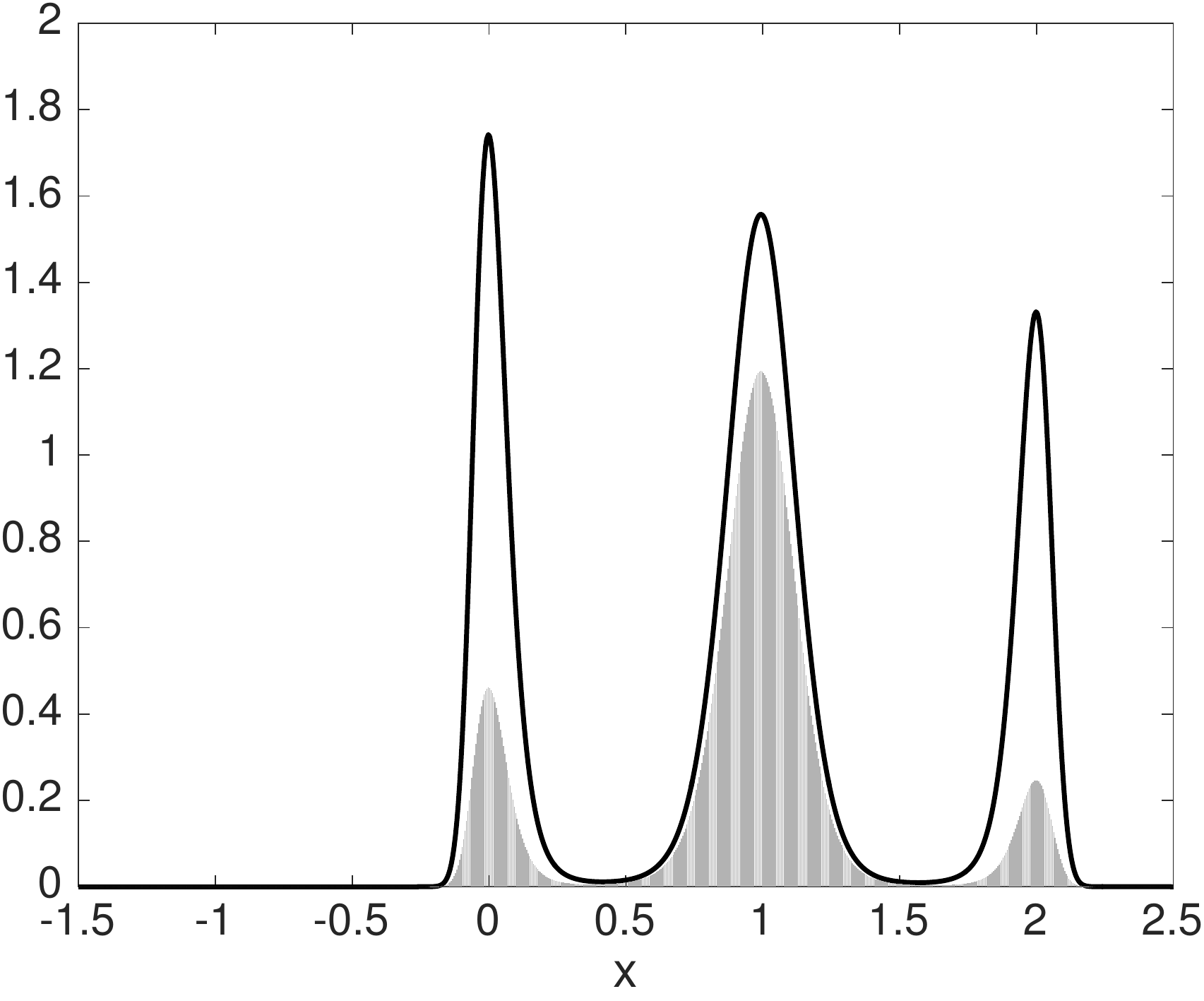}
\hfill
\includegraphics[width = 0.3\textwidth]{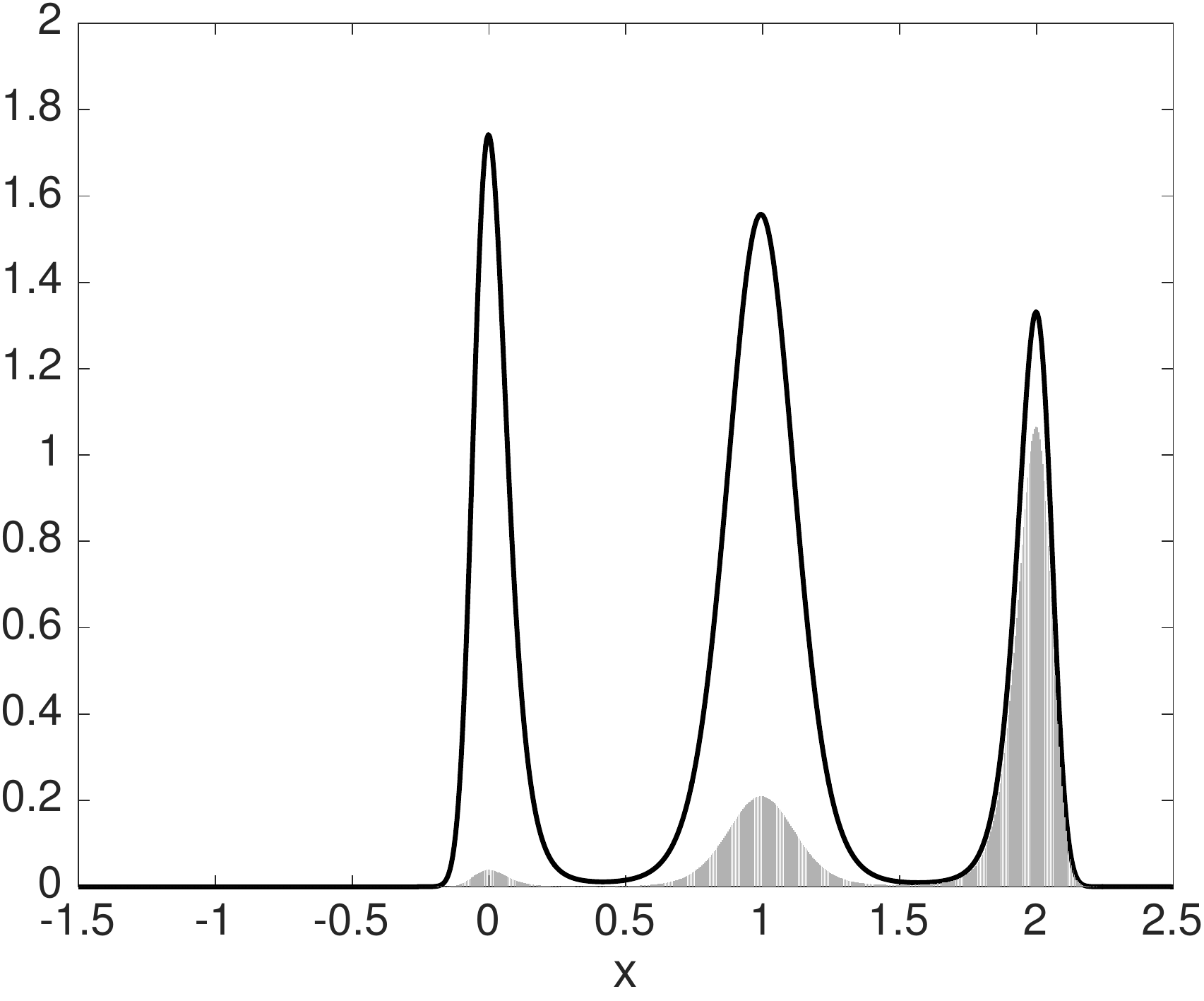}

\caption{Top: initial ensemble $\mu_0$ (black solid) and its respective parts in the three coherent sets (gray shading). Bottom: final ensemble $\mu_1$ (black solid) and the image of the corresponding gray ensembles from the top row (gray shading).}
\label{fig:tw_shift_ensembles}
\end{figure}
Computing the MSM from the transition probabilities between the coherent sets as in~\eqref{eq:coh_transrates} gives the stochastic matrix
\[
\widehat{T}_3 =
\begin{pmatrix}
0.794 & 0.150 & 0.026 \\
0.196 & 0.767 & 0.274 \\
0.010 & 0.083 & 0.701
\end{pmatrix}.
\]
The initial distribution~$\widehat{\mu}_0$ of this MSM is given by the probability that~$\mu_0$ assigns to the respective coherent sets at initial time. Analogously, collecting the probabilities from~$\mu_1$ in the coherent sets at final time gives the final distribution $\widehat{\mu}_1$ of the MSM. We have
\[
\widehat{\mu}_0 = \begin{pmatrix}
0.250 \\ 0.500\\ 0.250
\end{pmatrix}
\quad\text{and}\quad
\widehat{\mu}_1 = \begin{pmatrix}
0.280 \\ 0.500 \\ 0.219
\end{pmatrix}.
\]
The singular values of $\widehat{T}_k$ as mapping from the $\widehat{\mu}_0$-weighted $\R^3$ to the $\widehat{\mu}_1$-weighted $\R^3$ are
\[
\widehat{\sigma}_1 = 1.000,\quad
\widehat{\sigma}_2 = 0.733,\quad
\widehat{\sigma}_3 = 0.534\,;
\]
they are in good agreement with the true singular values of~$\T$.

We repeat the computation with a different initial distribution~$\mu_0$, where only the left and right well are initially populated, as shown in Figure~\ref{fig:tw_shift_ensembles_rholeftmid}.
\begin{figure}[htb]
\centering

\includegraphics[width = 0.3\textwidth]{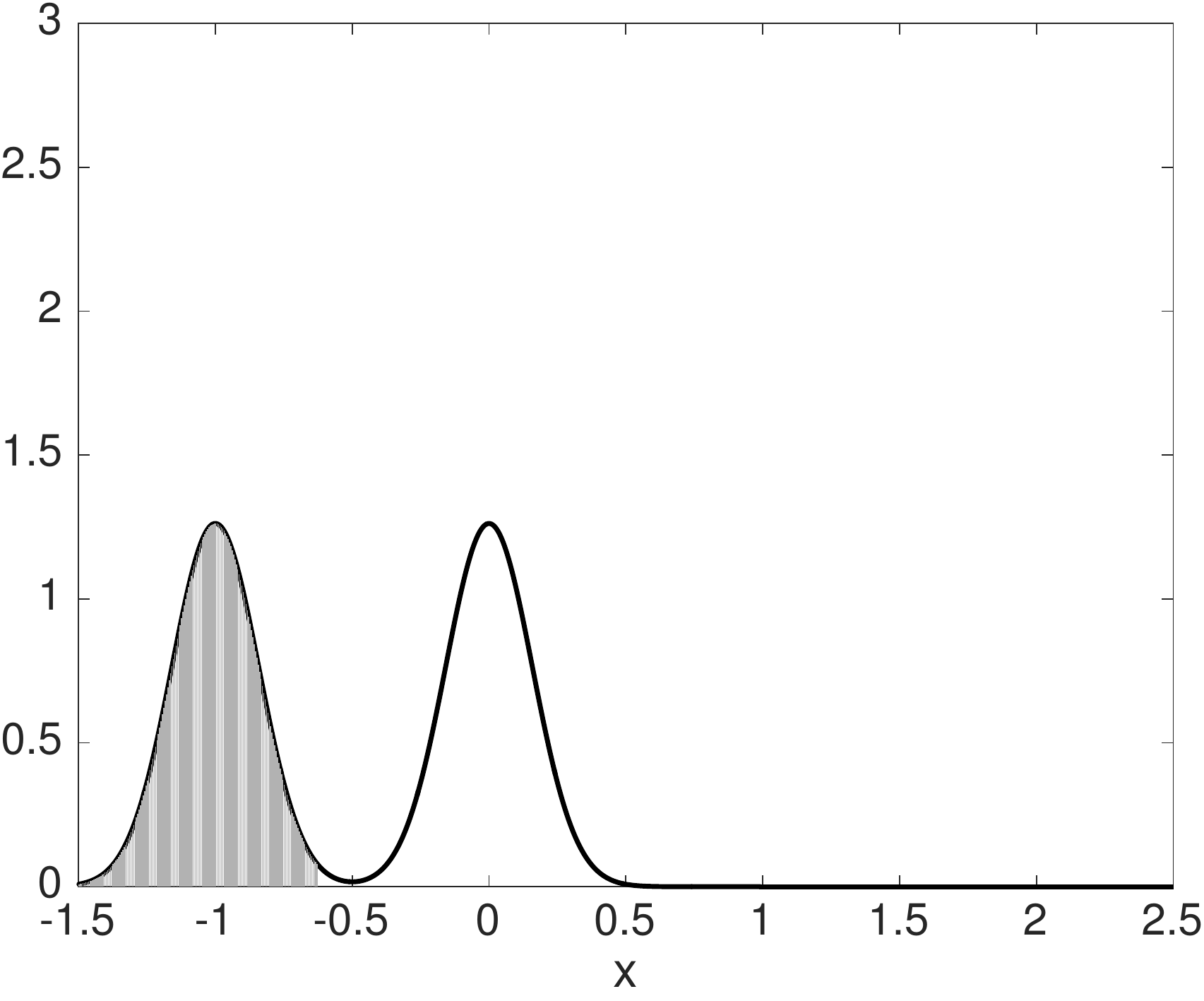}
\qquad
\includegraphics[width = 0.3\textwidth]{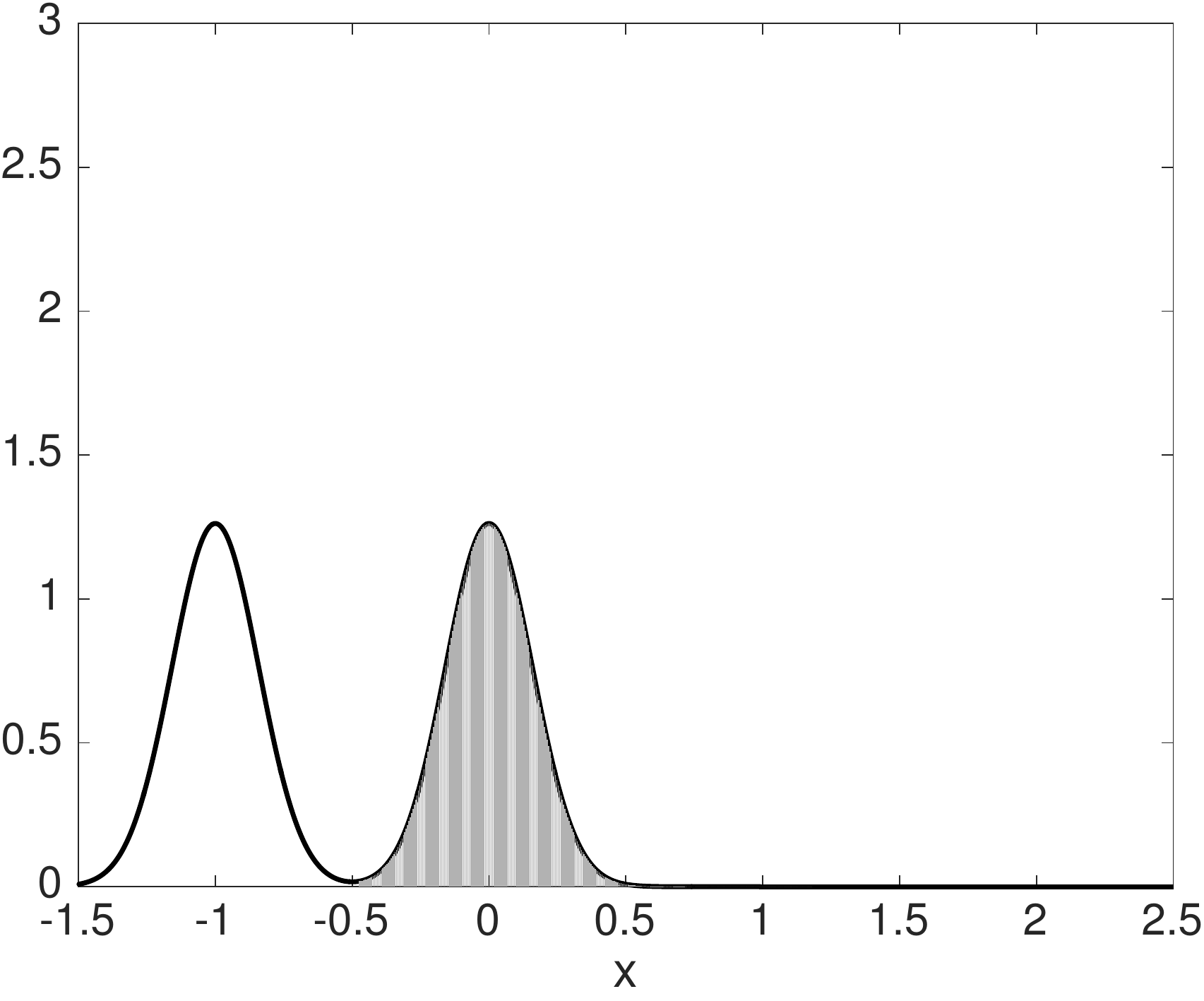}

\includegraphics[width = 0.3\textwidth]{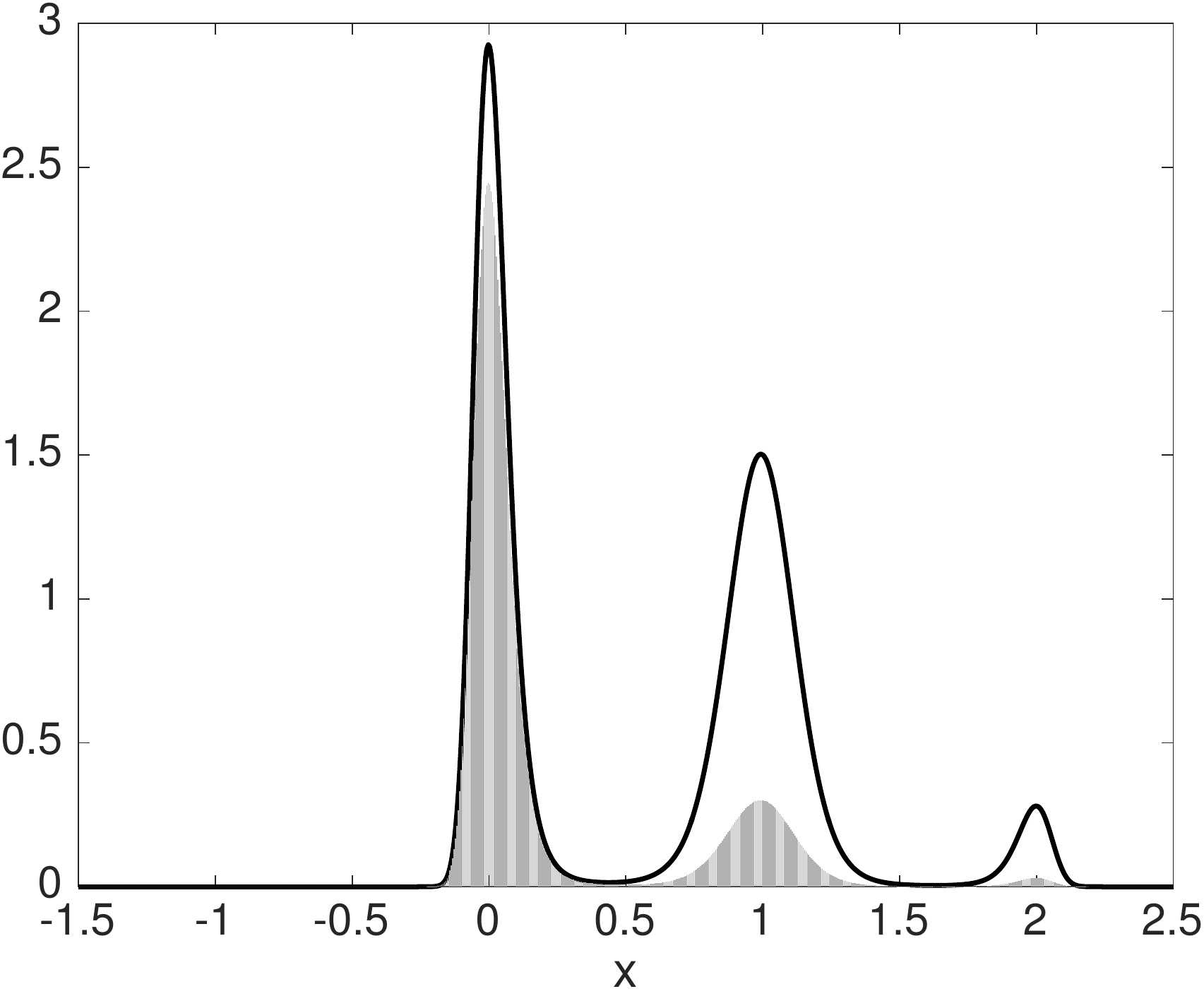}
\qquad
\includegraphics[width = 0.3\textwidth]{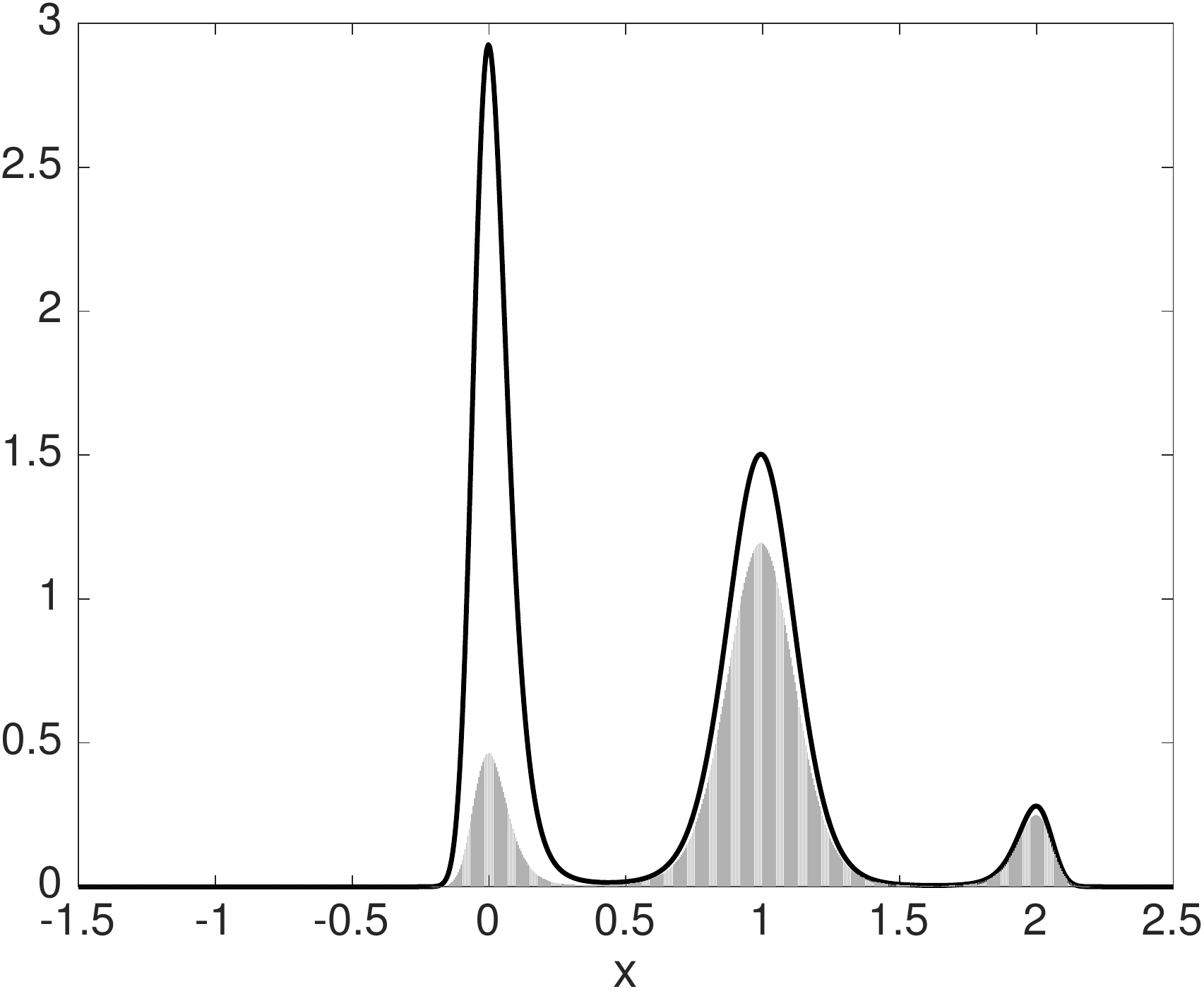}

\caption{The same as Figure~\ref{fig:tw_shift_ensembles}, for a different initial distribution $\mu_0$.}
\label{fig:tw_shift_ensembles_rholeftmid}
\end{figure}
The largest singular values of~$\T$,
\[
\sigma_1 = 1.000,\quad
\sigma_2 = 0.643,\quad
\sigma_3 = 0.030,\quad
\sigma_4\approx 0\,,
\]
already show that there are only two coherent sets, as the third singular value is significantly smaller than the second one. The left well forms one coherent set, and the union of the middle and right ones form the second coherent set.

\section{Data-based approximation}
\label{sec:data based approx}

\paragraph{Setting and auxiliary objects.}

We would like to estimate the GMSM~\eqref{eq:MSMneq} from trajectory data. In the time-inhomogeneous  setting, let us assume that we have $m$ data points~$x_1,\ldots,x_m$ at time $t_0$, and their (random) images $y_1,\ldots,y_m$ at time $t_1$. We can think of the empirical distribution of the $x_i$ and $y_i$ being estimates of $\mu_0$ and $\mu_1$, respectively.

Let us further define two sets of basis functions $\chi_{0,1},\ldots,\chi_{0,n}$ and $\chi_{1,1},\ldots,\chi_{1,n}$, which we would like to use to approximate the GMSM. If we would like to estimate the first $k$ dominant modes, the least requirement is $n\ge k$; in general we have $n\gg k$. The vector-valued functions
\[
\chi_0 = \begin{pmatrix}
\chi_{0,1}\\ \vdots \\ \chi_{0,n}
\end{pmatrix},\quad
\chi_1 = \begin{pmatrix}
\chi_{1,1}\\ \vdots \\ \chi_{1,n}
\end{pmatrix}
\]
are basis functions at initial and final times, respectively. One can take $\chi_0$ and $\chi_1$ to have different lengths too, we just chose them to have the same lengths for convenience. Now we can define the data matrices
\[
\dat{\chi}_0 = \begin{pmatrix}
\vert & & \vert \\
\chi_0(x_1) & \ldots & \chi_0(x_m) \\
\vert & & \vert
\end{pmatrix},\quad
\dat{\chi}_1 = \begin{pmatrix}
\vert & & \vert \\
\chi_1(y_1) & \ldots & \chi_1(y_m) \\
\vert & & \vert
\end{pmatrix}\,.
\]
The following $n\times n$ correlation matrices~$C_{00}, C_{01}, C_{11}$  will be needed later:
\[
C_{00,ij} = \innerprod{\chi_{0,i}}{\chi_{0,j}}_{\mu_0} ,\quad C_{01,ij} = \innerprod{\chi_{1,i}}{\T \chi_{0,j}}_{\mu_1}, \quad
C_{11,ij} = \innerprod{\chi_{1,i}}{\chi_{1,j}}_{\mu_1}\,.
\]
Their Monte Carlo estimates from the trajectory data are given by products of the data-matrices, as
\begin{equation} \label{eq:corr_mats}
C_{00} \approx \frac{1}{m} \dat{\chi}_0\dat{\chi}_0^T,\quad
C_{01} \approx \frac{1}{m} \dat{\chi}_1\dat{\chi}_0^T,\quad
C_{11} \approx \frac{1}{m} \dat{\chi}_1\dat{\chi}_1^T\,.
\end{equation}
Note that the approximations in~\eqref{eq:corr_mats} become exact if we take $\mu_0,\mu_1$ to be the empirical distributions~$\mu_0 = \tfrac{1}{m}\sum_i \delta(\cdot-x_i)$ and~$\mu_1 = \tfrac{1}{m}\sum_i \delta(\cdot-y_i)$, where~$\delta(\cdot)$ denotes the Dirac delta.
We assume that $C_{00}, C_{11}$, just as their data-based approximations in~\eqref{eq:corr_mats} are invertible. If they are not, all the occurrences of their inverses below need to be replaced by their Moore--Penrose pseudoinverses. Alternatively, one can also discard basis functions that yield redundant information, until~$C_{00}, C_{11}$ are invertible. Further strategies to deal with the situation where the correlation matrices are singular or ill-conditioned can be found in~\cite{WuEtAl17}.

\paragraph{Projection on the basis functions.}

To find the best GMSM representable with the bases $\chi_0$ and $\chi_1$, we would like to solve \eqref{eq:denspropneq} under the constraint that the ranges of~$\Q_0$ and $\Q_1$ are in $\set{W}_0:=\text{span}(\chi_0)$ and~$\set{W}_1:=\text{span}(\chi_1)$, respectively. To the knowledge of the authors it is unknown whether this problem has an explicitly computable solution, because it involves a non-trivial interaction of~$\set{W}_0,\set{W}_1$ and~$\T$.

Instead, we will proceed in two steps. First, we compute the projected transfer operator~$\T_n = \Pi_1\T\Pi_0$, where~$\Pi_0$ and~$\Pi_1$ are the $\mu_0$- and~$\mu_1$-orthogonal projections on~$\set{W}_0$ and~$\set{W}_1$, respectively. Second, we reduce~$\T_n$ to its best rank-$k$ approximation~$\T_k$ (best in the sense of density propagation).

Thus, the restriction~$\T_n$ to~$\set{W}_0\to \set{W}_1$ is simply the $\mu_1$-orthogonal projection of $\T$ on~$\set{W}_1$, giving the characterization
\begin{equation} \label{eq:T_n_Galerkin}
\innerprod{\chi_{1,j}}{\T\chi_{0,i} - \T_n\chi_{0,i}}_{\mu_1} = 0,\quad \forall i,j\,.
\end{equation}
It is straightforward to compute that with respect to the bases $\chi_0$ and~$\chi_1$ the matrix representation~$T_n$ of~$\T_n: \set{W}_0\to \set{W}_1$ is given by
\begin{equation} \label{eq:T_N_repres}
T_n = C_{11}^{-1}C_{01}\,,
\end{equation}
see~\cite{WuNo17}.

\paragraph{Best low-rank approximation.}

To find the best rank-$k$ projection of~$\T_n$, let us now switch to the bases $\tilde{\chi}_0 = C_{00}^{-1/2}\chi_0$ and $\tilde{\chi}_1 = C_{11}^{-1/2}\chi_1$. We can switch between representations with respect the these bases by
\[
f = \sum_{k=1}^n c_k \chi_{0,k}\quad \Longleftrightarrow \quad f = \sum_{k=1}^n \tilde{c}_k \tilde{\chi}_{0,k},\text{ where }\tilde{c} = C_{00}^{1/2}c\,,
\]
and similarly for~$\chi_1$ and~$\tilde{\chi}_1$. Again, a direct calculation shows that $\tilde{\chi}_0$ and $\tilde{\chi}_1$ build orthonormal bases, i.e., $\innerprod{\tilde{\chi}_{0,i}}{\tilde{\chi}_{0,j}}_{\mu_0} = \delta_{ij}$ and $\innerprod{\tilde{\chi}_{1,i}}{\tilde{\chi}_{1,j}}_{\mu_1} = \delta_{ij}$. This has the advantage, that for any operator~$\op{S}_n:\set{W}_0 \to \set{W}_1$ having matrix representation~$S_n$ with respect to the bases $\tilde{\chi}_0$ and~$\tilde{\chi}_1$ we have
\begin{equation}
\|\op{S}_n\| = \|S_n\|_2,
\end{equation}
where $\|\cdot\|_2$ denotes the spectral norm of a matrix (i.e., the matrix norm induced by the Euclidean vector norm). The matrix representation of~$\T_n$ in the new bases is
\begin{equation} \label{eq:T_n_newbasis}
\tilde{T}_n = C_{11}^{-1/2} C_{01} C_{00}^{-1/2}\,.
\end{equation}
However, finding now the best rank-$k$ approximation~$\T_k$ of $\T_n$ amounts, written in these new bases, to
\[
\| \T_n - \T_k\| = \| \tilde{T}_n - \tilde{T}_k\|_2 \to \min_{\text{rank}(\tilde{T}_k) = k}.
\]
Again, by the Eckart--Young theorem~\cite[Theorem 4.4.7]{HsEu15}, the solution to this problem is given by
\begin{equation} \label{eq:T_k}
\tilde{T}_k = \tilde{V}\tilde{\Sigma}\tilde{U}^T,
\end{equation}
where~$\tilde{U}, \tilde{V}\in\R^{n\times k}$ are the matrices with columns being the right and left singular vectors of $\tilde{T}_n$ to the largest $k$ singular values~$\sigma_1\ge \ldots\ge \sigma_k$, and~$\tilde{\Sigma}$ is the diagonal matrix with these singular values on its diagonal. Thus, the best GMSM in terms of propagation error is given with respect to the bases~$\chi_0$ and~$\chi_1$ by
\begin{equation} \label{eq:bestMSM}
T_k = C_{11}^{-1/2} \tilde{V}\tilde{\Sigma}\tilde{U}^T C_{00}^{1/2}\,.
\end{equation}
The resulting algorithm to estimate the optimal GMSM is now identical to the time-lagged canonical correlation algorithm (TCCA) that results from VAMP and is described in~\cite{WuNo17}.
\begin{algorithm} \label{algo:bestMSM}
\begin{enumerate}
\item\setlength{\itemsep}{0mm}
Choose bases $\chi_0$ and~$\chi_1$.
\item
Estimate the correlation matrices $C_{00}, C_{01}, C_{11}$ from data, as in~\eqref{eq:corr_mats}.
\item
Build the projection~$\tilde{T}_n$ of the transfer operator with respect to the modified bases~$\tilde{\chi}_0 = C_{00}^{-1/2}\chi_0$ and $\tilde{\chi}_1 = C_{11}^{-1/2}\chi_1$, i.e., $\tilde{T}_n = C_{11}^{-1/2} C_{01} C_{00}^{-1/2}$, cf.~\eqref{eq:T_n_newbasis}.
\item
Compute the $k$ largest singular values and corresponding right and left singular vectors of~$\tilde{T}_n$, collected into the matrices~$\tilde{\Sigma}$ and~$\tilde{U}, \tilde{V}$, respectively.
\item
The optimal rank-$k$ GMSM has with respect to the original bases~$\chi_0$ and~$\chi_1$ the matrix representation~$C_{11}^{-1/2} \tilde{V}\tilde{\Sigma}\tilde{U}^T C_{00}^{1/2}$; cf.~\eqref{eq:bestMSM}.
\end{enumerate}
\vspace*{-0.5\baselineskip}\caption{TCCA algorithm to estimate a rank-$k$ GMSM.}
\end{algorithm}

\begin{remark}[Reversible system with equilibrium data] \label{rem:revsys}
If the system in consideration is reversible, the data samples its equilibrium distribution, i.e.,~$\mu_0 = \mu_1 = \mu$, and also~$\chi_0 = \chi_1$, then~$C_{00} = C_{11}$, and by the self-adjointness of~$\T$ from~\eqref{eq:selfadj} we have~$C_{01} = C_{01}^T$. Thus,~$\tilde{T}_n$ in~\eqref{eq:T_n_newbasis} is a symmetric matrix, and as such, its singular value and eigenvalue decompositions coincide. Hence, the construction for the best GMSM in this section (disregarding the projection on the basis functions) coincides with the one from section~\ref{sec:MSMeq}. This is not surprising, as both give the best model in terms of propagation error.
\end{remark}

\begin{remark}[Other data-based methods] \label{rem:datameth}
The approximation~\eqref{eq:T_N_repres} of the transfer operator has natural connections to other data-based approximation methods. It can be seen as a problem-adapted generalization of the so-called Extended Dynamic Mode Decomposition (EDMD)~\cite{WKR15,KlKoSch16}. Strictly speaking, however, EDMD uses an orthogonal projection with respect to the distribution~$\mu_0$ of the initial data~$\{x_i\}$, and so approximation~\eqref{eq:T_n_ref} below is equivalent to it. EDMD has been shown in~\cite{KNKWKSN17} to be strongly connected to other established analytic tools for (molecular) dynamical data, such as time-lagged independent component analysis (TICA)~\cite{PHEtAl13,SchPa13}, blind source separation~\cite{MoSch94}, and the variational approach to conformation analysis~\cite{NoNu13}. 
\end{remark}

\section{Time-homogeneous systems and non-stationary data}
\label{sec:nonstat timehomog}


In this final section we illustrate how the above methods can be used to construct a GMSM for and assess properties of a stationary system, even if the simulation data at our disposal does not sample the stationary distribution of the system. In the first example we reconstruct the equilibrium distribution of a reversible system---hence we are able to build an equilibrium GMSM. In the second example we approximate a non-reversible stationary system (i.e., detailed balance does not hold) by a (G)MSM, again from non-stationary data.

Of course, all the examples presented so far can also be computed by the data-based algorithm of section~\ref{sec:data based approx}.

\subsection{Equilibrium MSM from non-equilibrium data}
\label{ssec:eqMSMneqData}

When working with simulation data, we need to take into account that this data might not be in equilibrium. Then, obviously, the empirical distribution does not reflect the stationary distribution of the system. In general, any empirical statistical analysis (e.g., counting transitions between a priori known metastable states) will be biased in such a case.

Let us consider a reversible system with equilibrium distribution~$\mu$, and let the available trajectory data be $\rhoref$-distributed. Then, it is natural to describe the system by its transfer operator $\T_{\text{ref}}:L^2_{\rhoref}\to L^2_{\rhoref}$ with respect to the reference distribution~\cite{KlKoSch16, WuEtAl17}; given explicitly by
\begin{equation} \label{eq:T_ref}
\T_{\text{ref}}\, u(x) = \frac{1}{\rhoref(x)} \int_{\X} u(y) \rhoref(y)\, p^t(y,x)\,\mathrm{d}y\,.
\end{equation}
Note that~$\mu_{\text{corr}} := \mu / \rhoref$ is the stationary distribution of this transfer operator, hence we can retrieve the equilibrium distribution of the system by correcting the reference distribution,~$\mu = \mu_{\text{corr}} \rhoref$.

In the data-based context, we choose the same basis~$\chi_0 = \chi_1$ for initial and final times, since the system is time-homogeneous. In complete analogy to~\eqref{eq:T_N_repres} above, the~$\rhoref$-orthogonal projection of~$\T_{\text{ref}}: L^2_{\rhoref}\to L^2_{\rhoref}$ to~$\set{V}_0$ is given by the matrix
\begin{equation} \label{eq:T_n_ref}
T_{\text{ref},n} = C_{00}^{-1}C_{01}\,.
\end{equation}
We will now apply this procedure to the double-well system from section~\ref{ssec:stationaryDW} with initial points~$x_1,\ldots,x_m$ distributed as shown in Figure~\ref{fig:doublewell_neqData_invdist} (gray histogram). We chose the number of points to be~$m=10^5$, the basis functions $\chi_{0,i}$ to be indicator functions of subintervals of an equipartition of the interval~$[-2,2]$ into $n=100$ subintervals, and the lag time~$\tau=10$. In a preprocessing step we discard all basis functions that do not have any of the points~$x_i$ in their support, thus obtaining a non-singular~$C_{00}$, and use the remaining~$77$ to compute~$T_{n,\text{ref}}\in \R^{77\times 77}$.
\begin{figure}[htb]
\centering

\includegraphics[width = 0.49\textwidth]{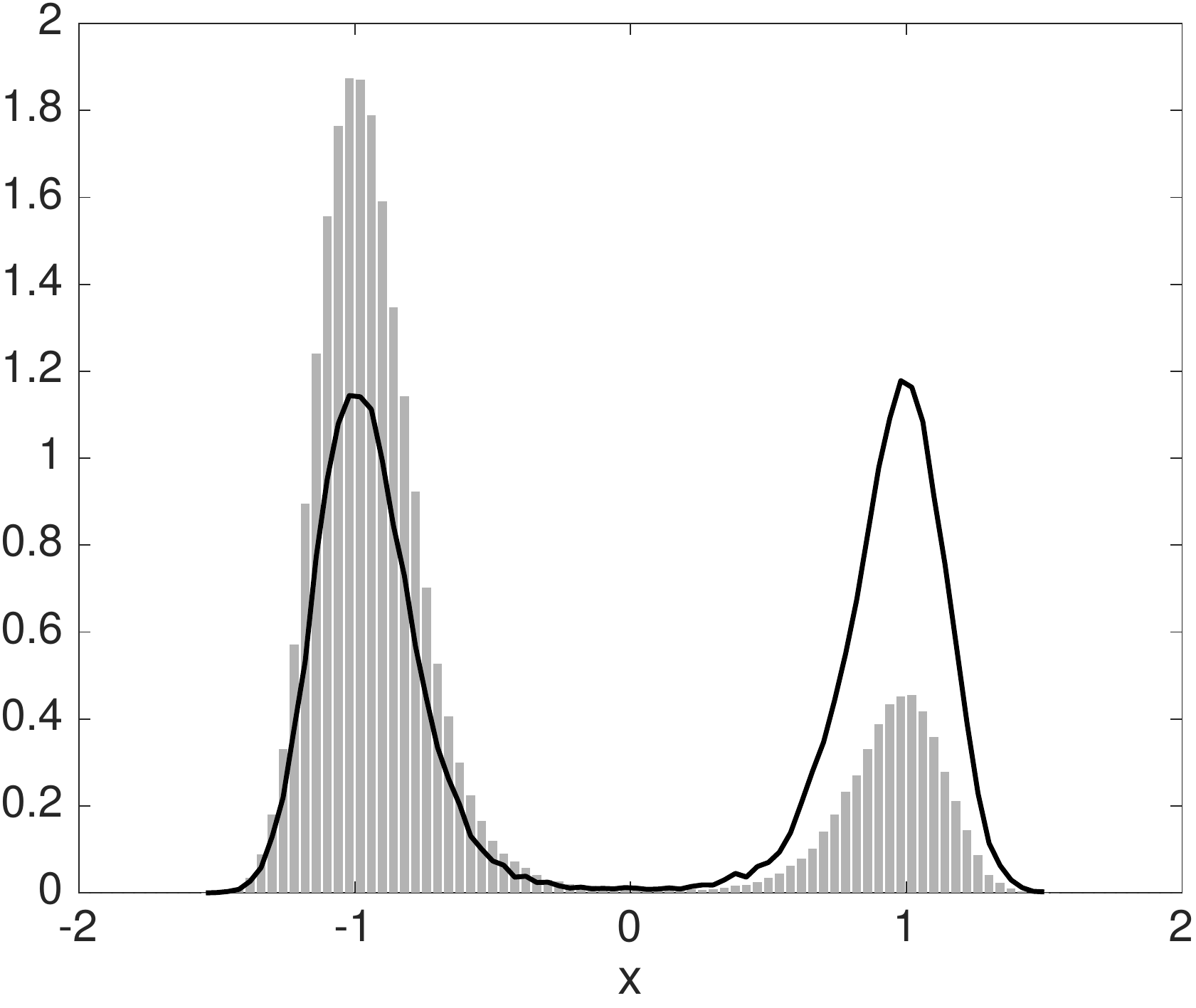}
\hfill
\includegraphics[width = 0.49\textwidth]{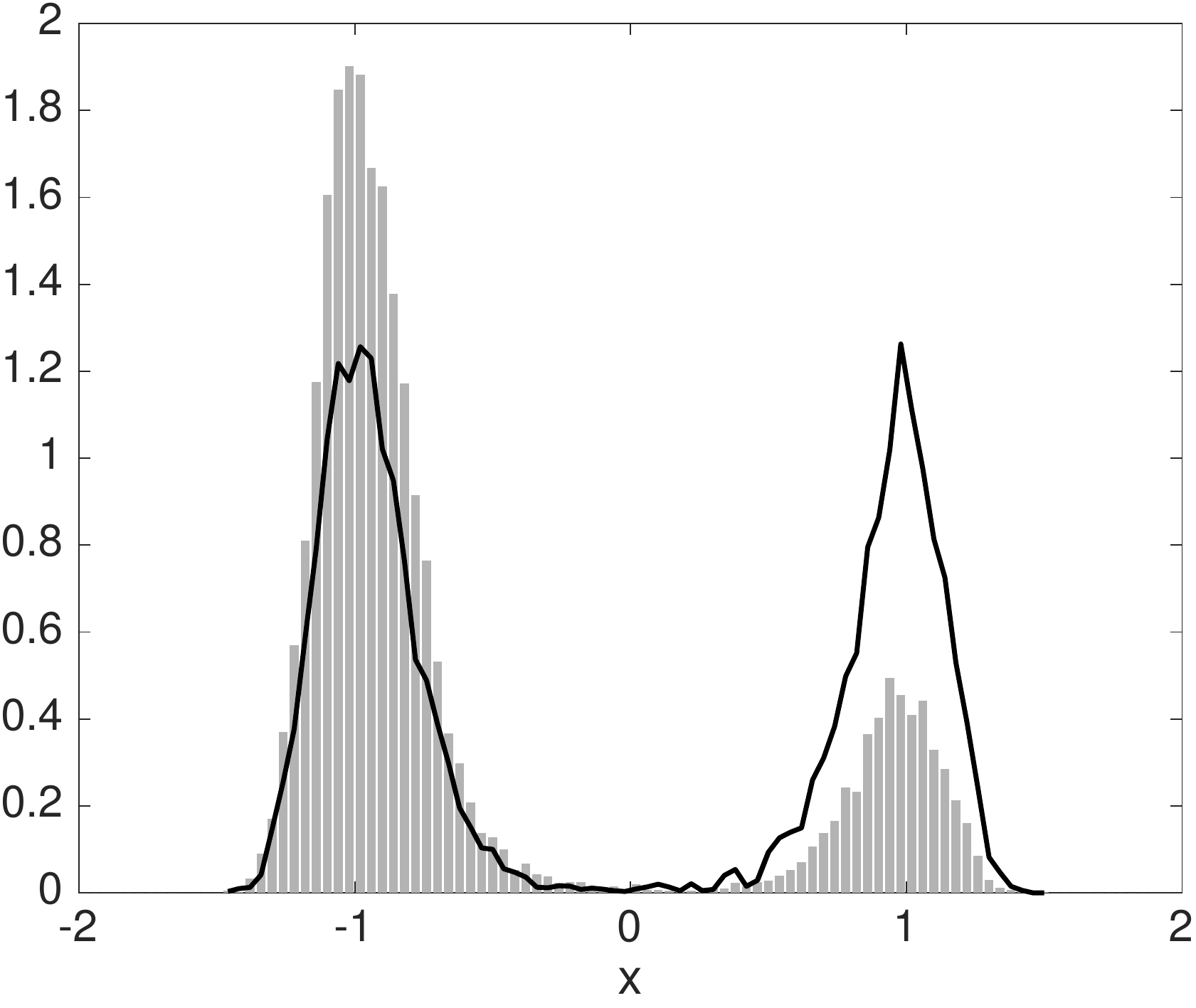}

\caption{The empirical initial distribution of the simulation data, i.e., the reference distribution~$\rhoref$ (gray histogram), and the corrected equilibrium distribution computed from this data (solid black line). Left: sample size $m=10^5$, right: sample size $m=10^4$.}
\label{fig:doublewell_neqData_invdist}
\end{figure}
We obtain $\lambda_2 = 0.894$ giving a time scale $t_2 = 89.6$, and the corrected equilibrium distribution---$\mu = \mu_{\text{corr}} \rhoref$, where $\mu_{\text{corr}}$ is the right eigenvector of $T_{\text{ref},n}$ at eigenvalue 1---is shown in Figure~\ref{fig:doublewell_neqData_invdist} (left) by the black curve. The right-hand side of this figure shows the results of the same computations, but for a sample size~$m=10^4$. Then, we obtain an eigenvalue $0.890$ and corresponding time scale~$85.9$.

It is now simple to reconstruct the approximation~$\T_n$ of~$\T$, the transfer operator with respect to the equilibrium density. Let~$D_{\text{corr}}$ denote the diagonal matrix with the elements of~$\mu_{\text{corr}}$ as diagonal entries. Then,~$T_n = D_{\text{corr}}^{-1}T_{n,\text{ref}} D_{\text{corr}}$ approximates the matrix representation of~$\T_n$ with respect to our basis of step functions.

\begin{remark}[Koopman reweighting] \label{rem:reversibilization}
One can make use of the knowledge that the system that one estimates is reversible, even though due to the finite sample size~$m$ this is not necessarily valid for~$T_{\text{ref},n}$. In~\cite{WuEtAl17}, the authors add for each sample pair~$(x_i,y_i)$ also the pair~$(x_{i+m} = y_i, y_{i+m} = x_i)$ to the sample set, thus numerically forcing the estimate to be reversible. In practice, one defines the diagonal matrix~$\dat{W}$ with diagonal~$\dat{\chi}^T\mu_{\text{corr}}$, builds the reweighted correlation matrices~$\bar{C}_{00} = \frac12 ( \dat{\chi}_0\dat{W}\dat{\chi}_0^T + \dat{\chi}_1\dat{W}\dat{\chi}_1^T )$ and~$\bar{C}_{01} = \frac12 ( \dat{\chi}_1\dat{W}\dat{\chi}_0^T + \dat{\chi}_0\dat{W}\dat{\chi}_1^T )$, and uses them instead of~$C_{00}, C_{01}$.
\end{remark}

\subsection{A non-reversible system with non-stationary data}
\label{eq:nonrev}

Reversible dynamics gives rise to self-adjoint transfer operators, and their theory of Markov state modeling is well developed. However, transfer operators of non-reversible systems are not self-adjoint, hence their spectrum is in general not purely real-valued. Thus, the definition of time scales, and in general the approximation by GMSMs is not fully evolved. Complex eigenvalues indicate cyclic behavior of the process. As this topic is beyond the scope of this paper, we refer the reader to~\cite{DeJu99,DjWeSch16,FrKo17} and to~\cite{DjBaSch15,KnSp15} for Markov state modeling with cycles.

We will consider a non-reversible system here, and show that restricting its behavior to the dominant singular modes of its transfer operator is able to reproduce its dominant long-time behavior, and even allows for a good, few-state MSM. Note that the best rank-$k$ GMSM~\eqref{eq:MSMneq} maps to the $k$-dimensional subspace~$\set{V}_1$ of left singular vectors, thus its eigenvectors also fall into this subspace.

The system in consideration consists of two driving ``forces'', one is a reversible part~$F_r(x) = -\nabla W(x)$ coming from the potential
\[
W(x) = \cos(7\varphi) + 10(r-1)^2,\quad\text{where }x = \begin{pmatrix}
r\cos(\varphi) \\
r\sin(\varphi)
\end{pmatrix},
\]
and the other is a circular driving given by
\[
F_c(x) = \mathrm{e}^{-\beta W(x)} \, \begin{pmatrix}
0 & 1\\ -1 & 0
\end{pmatrix}x\,,
\]
where $\beta=2$ is the inverse temperature, as in~\eqref{eq:overdampedLangevin}. The dynamics now is governed by the SDE~$ \mathrm{d} x_t = (F_r+F_c)(x_t)\,\mathrm{d}t + \sqrt{2\beta^{-1}} \, \mathrm{d}w_t$. It is a diffusion in a 7-well potential (the wells are positioned uniformly on the unit circle) with an additional clockwise driving that is strongest along the unit circle and decreases exponentially in the radial distance from this circle.

For our data-based analysis we simulate a trajectory of this system of length 500 and sample it every 0.01 time instances to obtain an initial set of $5\cdot 10^4$ points. Every point herein is taken as initial condition of 100 independent simulations of the SDE for lag time $\tau=1$, thus obtaining~$5\cdot 10^6$ point pairs~$(x_i,y_i)$.
\begin{figure}[htb]
\centering

\includegraphics[width = 0.32\textwidth]{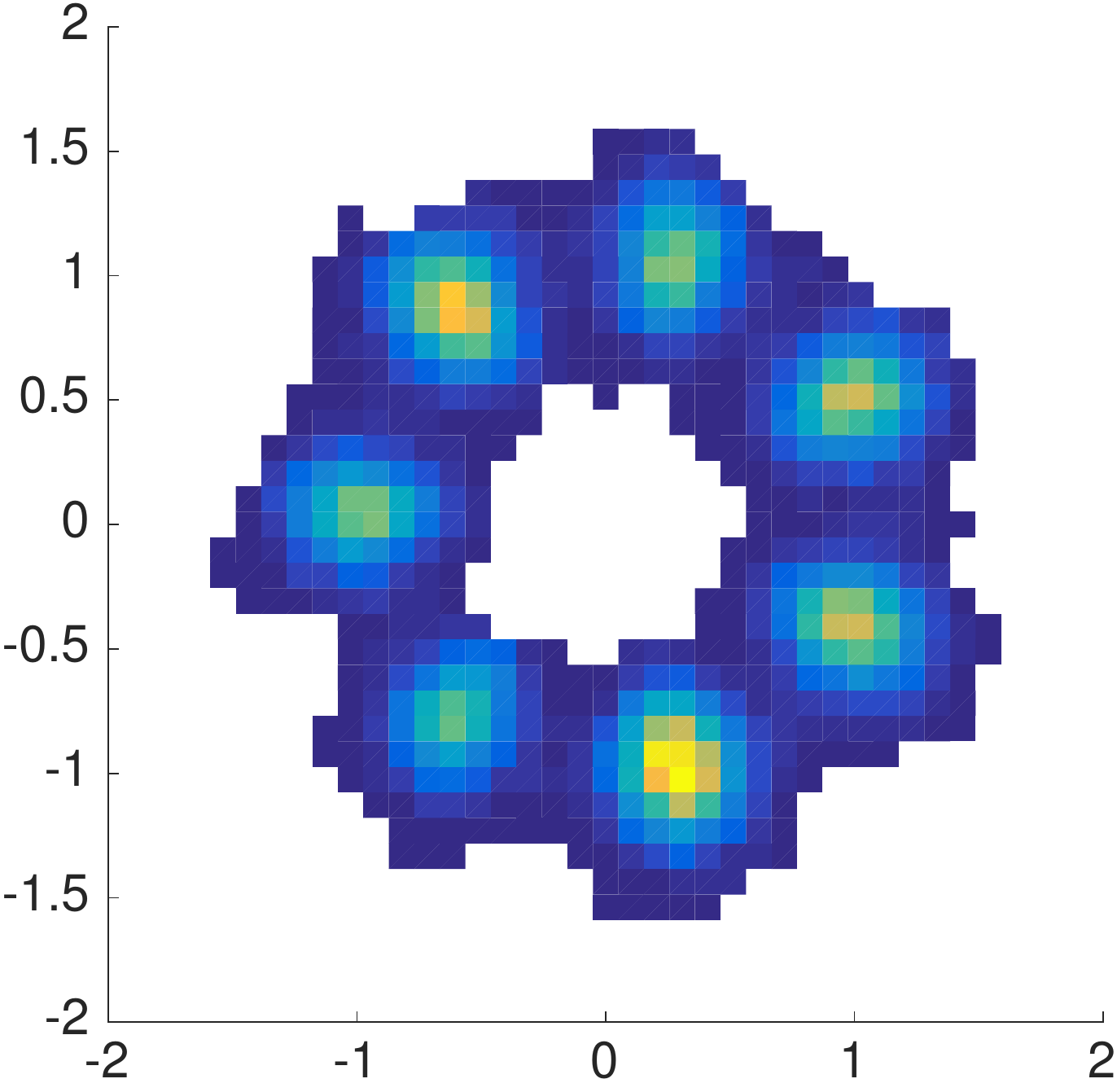}
\hfill
\includegraphics[width = 0.32\textwidth]{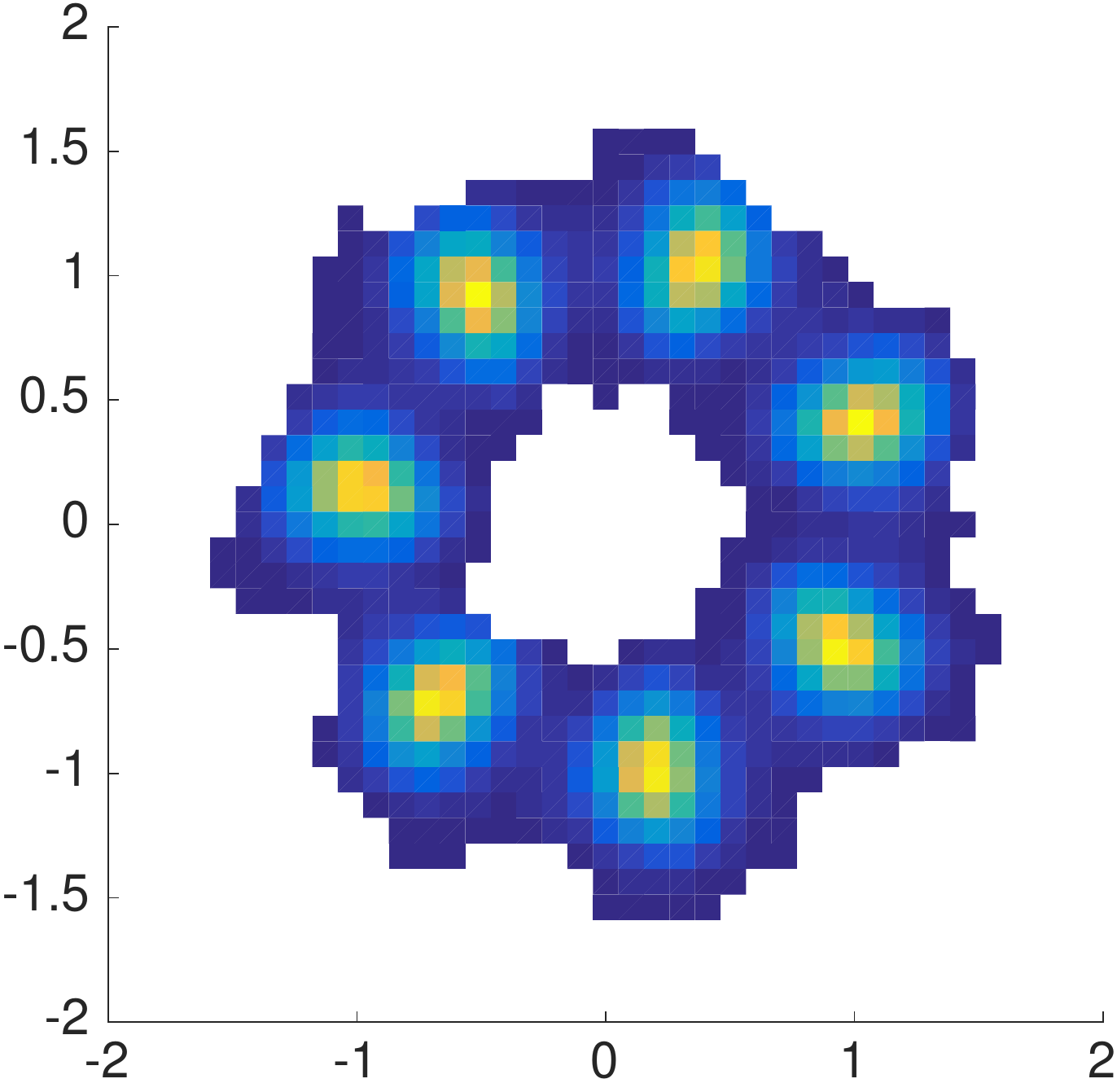}
\hfill
\includegraphics[width = 0.32\textwidth]{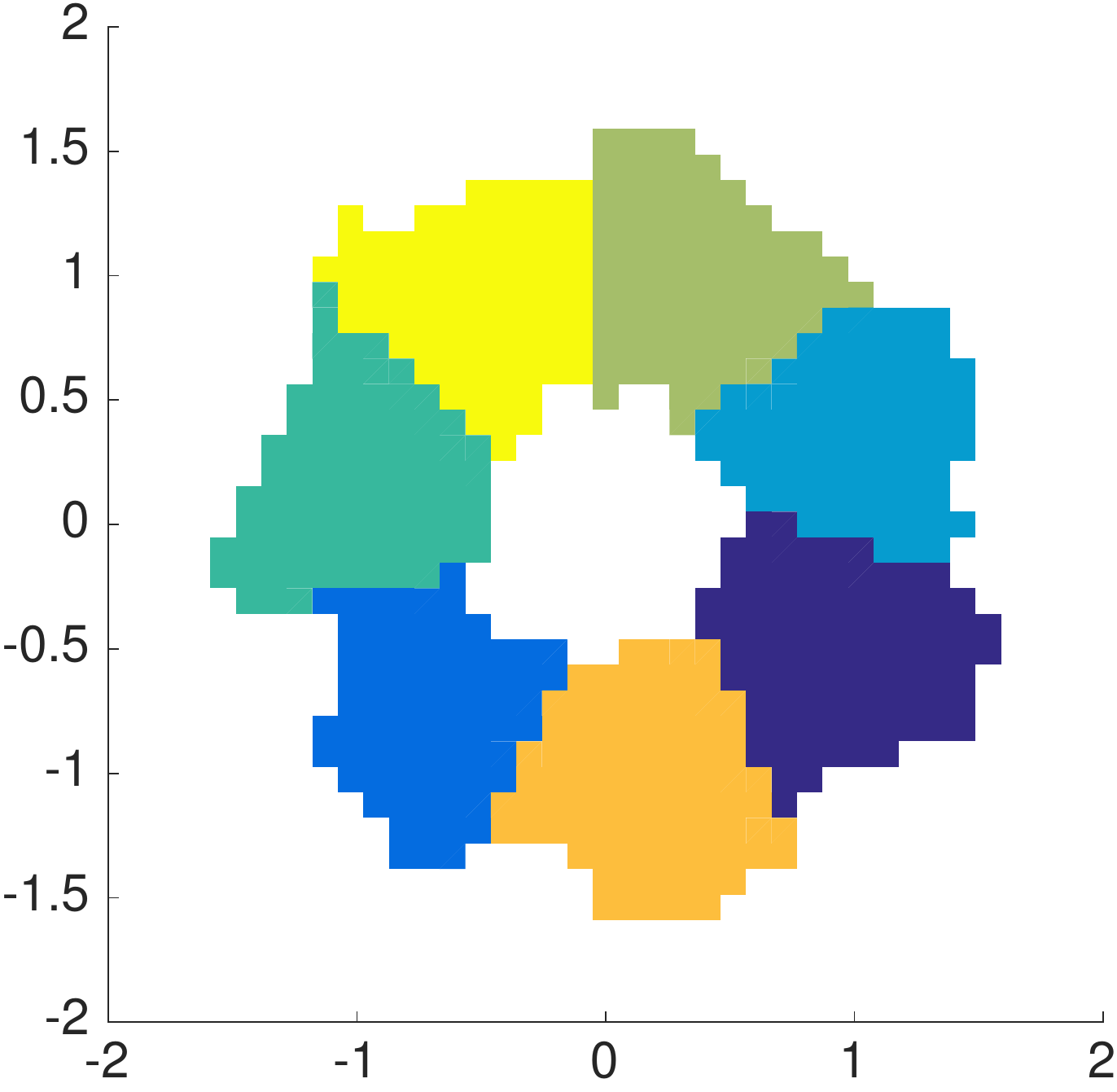}

\caption{Left: empirical distribution of the $x_i$ (histogram with $40\times 40$ bins). Middle: corrected invariant distribution. Right: clustering of the populated partition elements based on the 7 dominant eigenfunctions of the low-rank GMSM.}
\label{fig:lemon_circ_dists}
\end{figure}
We observe in Figure~\ref{fig:lemon_circ_dists} (left) that the empirical distribution of the $x_i$ did not yet converge to the invariant distribution of the system, which would populate every well evenly.

To approximate the transfer operator we use $\chi = \chi_0 = \chi_1$ consisting of the characteristic functions of a uniform $40\times 40$ partition of~$[-2,2]\times [-2,2]$, and restrict this basis set to those~$683$ partition elements that contain at least one~$x_i$ and~$y_j$. The associated projected transfer operator, $T_n$ from~\eqref{eq:T_N_repres} is then used to compute $\tilde{T}_n$ from~\eqref{eq:T_n_newbasis}, and its singular values
\[
\sigma_1 = 1.000,\ 
\sigma_2 = 0.872,\
\sigma_3 = 0.588,\
\ldots,\
\sigma_7 = 0.383,\
\sigma_8 = 0.052,
\]
indicating a gap after seven singular values. Thus, we assemble a rank-7 GMSM~$T_k$ via~\eqref{eq:bestMSM}. This GMSM maps $L^2_{\mu_0}$ to~$L^2_{\mu_1}$, thus to make sense of its eigenmodes, we need to transform its range to densities with respect to~$\mu_0$ instead of~$\mu_1$. As a density~$u$ with respect to~$\mu_1$ is made by~$\frac{\mu_1 u}{\mu_0}$ to a density with respect to~$\mu_0$,
\[
T_k' = C_{00}^{-1}C_{11}T_n
\]
rescales the GMSM to map~$L^2_{\mu_0}$ to itself.\footnote{Note here that since the basis functions are characteristic functions with disjoint support, the correlation matrices $C_{00}, C_{11}$ are diagonal, having exactly the empirical distributions as diagonal entries---i.e., the number of data point falling into the associated partition element.}
We are also interested in the system's invariant distribution. As in section~\ref{ssec:eqMSMneqData}, we can correct the reference distribution~$\rhoref = \mu_0$ by the first eigenfunction~$\mu_{\text{corr}}$ of~$T_k'$ to yield the invariant distribution~$\mu = \mu_{\text{corr}} \rhoref$, cf.~Figure~\ref{fig:lemon_circ_dists} (middle). The dominant eigenvalues of~$ T_k' $ are
\begin{align*}
\lambda_{k,1}' = 0.998 + 0.000i,&\quad \lambda_{k,2/3}' =  0.803 \pm 0.261i, \\
\quad \lambda_{k,4/5}' = 0.511 \pm 0.230i,&\quad \lambda_{k,6/7}' =    0.378 \pm 0.077i\,,
\end{align*}
Note that~$\lambda_{k,1}'<1$. This is due to our restriction of the computation to certain partition elements, as specified above. This set of partition elements is not closed under the process dynamics, and this ``leakage of probability mass'' (about $0.2\%$) is reflected by the dominant eigenvalue. All eigenvalues  of $ T_k' $ are within $0.5\%$ error from the dominant eigenvalues of the transfer operator~$ T_n' $ with respect to the stationary distribution (projected on the same basis set, and computed with higher accuracy), which is a surprisingly good agreement. 

The $8$-th eigenvalue of~$ T_n' $ is smaller in magnitude than~$ 0.03 $.
As indicated by this spectral gap, we may obtain a few-state MSM~$\widehat{T}_k$ here as well. To this end we need to find ``metastable sets'' (although in the case of this cyclically driven system the term metastability is ambiguous) on which we can project the system's behavior. Let~$v_i = (v_{i,1},\ldots,v_{i,n})^T$ denote the $i$-th eigenvector of~$T_k'$. As in the reversible case, where eigenvectors are close to constant on metastable sets, we will seek also here for regions that are characterized by almost constant behavior of the eigenvectors. More precisely, if the $p$-th and~$q$-th partition elements belong to the same metastable set, then we expect~$v_{i,p}\approx v_{i,q}$ for $i=1,\ldots,7$. Thus, we embed the $p$-th partition element into~$\C^7 \equiv \R^{14}$ (i.e., a complex number is represented by two coordinates: its real and imaginary parts) by~$p\mapsto (v_{1,p},\ldots, v_{7,p})^T$, and cluster the hence arising point cloud into $7$ clusters by the $k$-means clustering algorithm.\footnote{The $k$-means algorithm provides a \emph{hard} clustering; i.e., every point belongs entirely to exactly one of the clusters. An automated way to find \emph{fuzzy} metastable sets from a set of eigenvectors is given by the PCCA+ algorithm~\cite{PCCAplus}. A fuzzy clustering assigns to each point a set of non-negative numbers adding up to~1, indicating the \emph{affiliations} of that point to each cluster.}
The result is shown in Figure~\ref{fig:lemon_circ_dists} (right). Taking these sets we can assemble the MSM~$\widehat{T}_k \in\R^{7\times 7}$ via~\eqref{eq:met_transrates}. We obtain a MSM that maps a Markov state (i.e., a cluster) with probability $0.62$ to itself, with probability $0.29$ to the clockwise next cluster, and with probability~$0.06$ to the second next cluster in clockwise direction. The probability to jump one cluster in the counterclockwise direction is below~$0.001$. The eigenvalues of~$\widehat{T}_k$,
\begin{align*}
\widehat{\lambda}_1 = 0.998 + 0.000i,&\quad \widehat{\lambda}_{2/3} =  0.800 \pm 0.260i, \\
\quad \widehat{\lambda}_{4/5} = 0.507 \pm 0.227i,&\quad \widehat{\lambda}_{6/7} =    0.374 + 0.076i\,,
\end{align*}
are also close to those of~$ T_n' $ (below $1 \%$ error), justifying this MSM.

\section*{Acknowledgments}

This work is supported by the Deutsche Forschungsgemeinschaft (DFG) through the CRC 1114 ``Scaling Cascades in Complex Systems'', projects A04 and B03, and the Einstein Foundation Berlin (Einstein Center ECMath).

\begin{appendix}

\section{Optimal low-rank approximation of compact operators}
\label{app:E-Ythm}

For completeness, we include a proof of the Eckart--Young--Mirsky theorem for compact operators between separable\footnote{A space is separable if it has a countable basis. The Lebesgue space~$L^2_{\mu}(\R^d)$ of $\mu$-weighted square-integrable functions is separable for bounded and integrable~$\mu$. This is the case we consider here.}
Hilbert spaces. In particular, it shows that the optimal low-rank approximation of such an operator is obtained by an \emph{orthogonal} projection on its subspace of dominant singular vectors; cf.~\eqref{eq:optlowrank}.

\begin{theorem}
Let $\A: \set{H}_0 \to \set{H}_1$ be a compact linear operator between the separable Hilbert spaces $\set{H}_0$ and $\set{H}_1$, with inner products $\innerprod{\cdot}{\cdot}_0$ and~$\innerprod{\cdot}{\cdot}_1$, respectively. Then, the optimal rank-$k$ approximation $\A_k$ of $\A$ in the sense that
\[
\| \A - \A_k \| \to \min_{\text{rank}\, \A_k = k}\,,
\]
where $\| \cdot \|$ denotes the induced operator norm, is given by
\begin{equation} \label{eq:optlowrank}
\A_k = \sum_{i=1}^k \sigma_i \psi_i \innerprod{\phi_i}{\cdot}_0\,,
\end{equation}
where $\sigma_i,\psi_i,\phi_i$ are the singular values (in non-increasing order), left and right normalized singular vectors of~$\A$, respectively. The optimum is unique iff $\sigma_k > \sigma_{k+1}$.
\end{theorem}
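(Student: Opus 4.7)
The approach rests on the singular value decomposition of $\A$ combined with a dimension-counting argument. My plan is: (i) build the SVD via the spectral theorem, (ii) verify the upper bound $\|\A-\A_k\|\le\sigma_{k+1}$ by direct computation, (iii) prove the matching lower bound for every rank-$k$ competitor by exhibiting a test vector in an appropriate $(k+1)$-dimensional subspace, and (iv) handle uniqueness under the spectral gap assumption, which I expect to be the main obstacle.

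For step (i), I would apply the spectral theorem to the compact, self-adjoint, positive operator $\A^*\A:\set{H}_0\to\set{H}_0$. This yields an orthonormal system $\{\phi_i\}_{i\ge 1}$ of eigenvectors with eigenvalues $\sigma_i^2$ ordered non-increasingly, $\sigma_i\to 0$, whose closed linear span equals $\overline{\mathrm{ran}(\A^*)}$ and whose orthogonal complement is $\ker\A$. Setting $\psi_i:=\sigma_i^{-1}\A\phi_i$ on indices with $\sigma_i>0$ produces an orthonormal system in $\set{H}_1$, and compactness of $\A$ yields the operator-norm convergent representation $\A=\sum_i \sigma_i\psi_i\innerprod{\phi_i}{\cdot}_0$, of which $\A_k$ is the $k$-term truncation. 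Step (ii) is then immediate from Bessel's inequality:
\begin{equation*}
\|(\A-\A_k)f\|_1^2 \;=\; \sum_{i>k}\sigma_i^2\,|\innerprod{\phi_i}{f}_0|^2 \;\le\; \sigma_{k+1}^2\,\|f\|_0^2,
\end{equation*}
with equality at $f=\phi_{k+1}$, so $\|\A-\A_k\|=\sigma_{k+1}$.

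For step (iii), let $\op{B}$ be an arbitrary rank-$k$ operator. Since $\ker\op{B}$ has codimension at most $k$, the $(k+1)$-dimensional subspace $\set{V}:=\mathrm{span}\{\phi_1,\ldots,\phi_{k+1}\}$ must meet $\ker\op{B}$ nontrivially. Picking a unit $f=\sum_{i=1}^{k+1}\alpha_i\phi_i$ in this intersection gives
\begin{equation*}
\|(\A-\op{B})f\|_1^2 \;=\; \|\A f\|_1^2 \;=\; \sum_{i=1}^{k+1}|\alpha_i|^2\sigma_i^2 \;\ge\; \sigma_{k+1}^2,
\end{equation*}
hence $\|\A-\op{B}\|\ge\sigma_{k+1}$, and therefore $\A_k$ is optimal.

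Uniqueness is the delicate step. Under $\sigma_k>\sigma_{k+1}$, equality in step (iii) forces $|\alpha_i|^2(\sigma_i^2-\sigma_{k+1}^2)=0$ for $i\le k$, so $\alpha_i=0$ there and $\phi_{k+1}\in\ker\op{B}$. Iterating the same argument on each $(k+1)$-dimensional subspace $\mathrm{span}\{\phi_1,\ldots,\phi_k,\phi_j\}$ for $j\ge k+2$ and on $\mathrm{span}\{\phi_1,\ldots,\phi_k,f\}$ for arbitrary unit $f\in\ker\A$ shows that $\ker\op{B}$ contains the entire orthogonal complement of $\mathrm{span}\{\phi_1,\ldots,\phi_k\}$ in $\set{H}_0$. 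The remaining task of identifying $\op{B}\phi_i=\sigma_i\psi_i$ for $i\le k$ I would handle by testing $\A-\op{B}$ on two-dimensional subspaces of the form $\mathrm{span}\{\phi_i,\phi_j\}$ with $j>k$ and analysing the resulting indefinite quadratic form: the constraint $\|\A-\op{B}\|\le\sigma_{k+1}$ propagates orthogonality of each residual $\sigma_i\psi_i-\op{B}\phi_i$ to every $\psi_j$ with $j>k$, which combined with the rank-$k$ condition on $\op{B}$ forces the residuals to vanish. The converse direction, non-uniqueness when $\sigma_k=\sigma_{k+1}$, is immediate from the unitary freedom available in the spectral decomposition of $\A^*\A$ within the $\sigma_k^2$-eigenspace: any admissible choice of orthonormal basis there produces an equally optimal $\A_k$.
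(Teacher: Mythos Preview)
Your steps (i)--(iii) are correct and coincide with the paper's argument: the paper also reads off $\|\A-\A_k\|=\sigma_{k+1}$ from the singular value expansion and then, for an arbitrary rank-$k$ competitor $\op{B}_k$, intersects $\ker\op{B}_k$ with $\mathrm{span}\{\phi_1,\ldots,\phi_{k+1}\}$ to produce a unit test vector on which $\|\A\cdot\|_1\ge\sigma_{k+1}$. The only difference is cosmetic---you spell out the construction of the SVD via the spectral theorem for $\A^*\A$, which the paper takes for granted.

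The paper asserts the uniqueness clause but does not prove it. Your step (iv) attempts this, and the first move (forcing $\phi_{k+1}\in\ker\op{B}$ via the equality case) is sound. The iteration, however, breaks down: when you pass to $\mathrm{span}\{\phi_1,\ldots,\phi_k,\phi_j\}$ with $j\ge k+2$, the constraint on a unit $f=\sum_{i\le k}\alpha_i\phi_i+\alpha_j\phi_j\in\ker\op{B}$ reads
\[
\sum_{i\le k}\sigma_i^2|\alpha_i|^2+\sigma_j^2|\alpha_j|^2 \;\le\; \sigma_{k+1}^2,
\]
and since now $\sigma_j<\sigma_{k+1}$, this no longer forces $\alpha_1=\cdots=\alpha_k=0$. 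Concretely, with $k=1$ and $(\sigma_1,\sigma_2,\sigma_3)=(2,1,\tfrac12)$ the inequality $4|\alpha_1|^2+\tfrac14|\alpha_3|^2\le 1$ allows $|\alpha_1|^2$ as large as $\tfrac15$. So $\phi_j\in\ker\op{B}$ does not follow by ``the same argument'', and the identical obstruction applies to your treatment of $f\in\ker\A$. What your reasoning \emph{does} establish is $\mathrm{span}\{\phi_1,\ldots,\phi_k\}\cap\ker\op{B}=\{0\}$, hence that $\op{B}\phi_1,\ldots,\op{B}\phi_k$ are linearly independent and span $\mathrm{ran}\,\op{B}$; but pinning down $\ker\op{B}$ as the \emph{orthogonal} complement of $\mathrm{span}\{\phi_1,\ldots,\phi_k\}$, and then identifying $\op{B}\phi_i=\sigma_i\psi_i$, requires additional work beyond what you have sketched.
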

\begin{proof}
Let $\A_k$ be defined as in~\eqref{eq:optlowrank}. Since $\A = \sum_{i=1}^{\infty} \sigma_i \psi_i \innerprod{\phi_i}{\cdot}_0$, we have
\begin{equation} \label{eq:optestim}
\| \A - \A_k \| = \| \sum_{i=k+1}^{\infty} \sigma_i \psi_i \innerprod{\phi_i}{\cdot}_0 \| = \sigma_{k+1}\,.
\end{equation}
Let now $\op{B}_k$ be any rank-$k$ operator from $\set{H}_0$ to $\set{H}_1$. Then, there exist linear functionals $c_i:\set{H}_0\to \R$ and vectors~$v_i\in\set{H}_1$, $i=1,\ldots,k$, such that
\[
\op{B}_k = \sum_{i=1}^k c_i(\cdot) v_i\,.
\]
For every~$i$, since $c_i$ has one-dimensional range, its kernel has co-dimension~1, thus the intersection of the kernels of all the~$c_i$ has co-dimension at most~$k$. Thus, any $(k+1)$-dimensional space has a non-zero element~$w$ with~$c_i(w) = 0$ for $i=1,\ldots,k$.

By this, we can find scalars~$\gamma_1,\ldots,\gamma_{k+1}$ such that~$\sum_{i=1}^{k+1}\gamma_i^2 = 1$ and~$w = \gamma_1\phi_1 + \ldots + \gamma_{k+1}\phi_{k+1}$ satisfies~$c_i(w) = 0$ for $i=1,\ldots,k$. By construction $\|w\|_0$ holds. It follows that
\[
\| \A - \op{B}_k \|^2 \ge \| (\A - \op{B}_k)w \|_1^2 = \| \A w \|_1^2 = \sum_{i=1}^{k+1} \sigma_i^2\gamma_i^2 \ge \sigma_{k+1}^2 \underbrace{\sum_{i=1}^{k+1} \gamma_i^2}_{=1}\,.
\]
This with~\eqref{eq:optestim} proves the claim.
\end{proof}
As a corollary, if $\A:\set{H} \to \set{H}$ is a self-adjoint operator, then its eigenvalue and singular value decompositions coincide, giving~$\psi_i = \phi_i$, and thus~$\A_k$ in~\eqref{eq:optlowrank} is the projection on the dominant eigenmodes.
\end{appendix}

\small
\bibliographystyle{alpha}
\bibliography{References}
\end{document}